\documentclass{article}
\usepackage{amsmath,amssymb}
\usepackage{graphicx}
\usepackage{amsthm}

\pagestyle{plain}

\theoremstyle{definition}
\newtheorem{prop}{Proposition}
\newtheorem{thm}{Theorem}
\newtheorem{lem}{Lemma}

\begin{document}
\begin{center}
{\Large New Airy-type solutions of the ultradiscrete Painlev\'{e} II equation with parity variables}\medskip\\
Hikaru Igarashi$^*$, Shin Isojima$^\dagger$ and Kouichi Takemura$^*$\medskip\\
$^*$Department of Mathematics, Faculty of Science and Engineering, Chuo University, 1-13-27 Kasuga, Bunkyo-ku Tokyo 112-8551, Japan\medskip\\
$^\dag$
Department of Industrial and Systems Engineering, Faculty of Science and Engineering, Hosei University, 3-7-2 Kajino-cho, Koganei-shi, Tokyo 184-8584, Japan
\end{center}
\section*{Abstract}
The $q$-difference Painlev\'{e} II equation admits special solutions written in terms of determinant whose entries are the general solution of the $q$-Airy equation. An ultradiscrete limit of the special solutions is studied by the procedure of ultradiscretization with parity varialbes.
Then we obtain new Airy-type solutions of the ultradiscrete Painlev\'{e} II equation with parity variables, and the solutions have richer structure than the known solutions.
\section{Introduction}
The Painlev\'{e} equations and the discrete Painlev\'{e} equations play important roles in the areas of mathematical physics such as conformal field theory and random matrices.
They have been studied extensively from a viewpoint of the symmetry (see e.g. \cite{OGH,Sak0}). Especially, Hirota's direct method of soliton theory \cite{Hir} affected the discoveries of solutions in terms of determinants to the Painlev\'{e}-type equations. On the other hand, by a suitable limit (ultradiscrete limit \cite{TTMS}) from the $q$-difference Painlev\'{e} equations (one of a family of the discrete Painlev\'{ e} equations), some systems of cellular automaton, which we call the ultradiscrete Painlev\'{e} equations, may appear \cite{TTGOR}. Cellular automaton is a discrete dynamical system in which both dependent and independent variables take discrete values, and it is suitable for computer experiments. Moreover, it is free from numerical errors and sometimes admits exact special solutions.
The ultradiscrete Painlev\'{e} equations are expected to preserve the essential properties of the original equations. 

In this paper, we focus on the $q$-difference Painlev\'{e} II equation ($q$-PII),
\begin{equation}
(z(q\tau) z(\tau) + 1)(z(\tau) z(q^{-1}\tau) + 1) = \frac{a\tau^2z(\tau)}{\tau - z(\tau)}. \label{eq:qPII}
\end{equation}
The p-ultradiscrete analogues of the $q$-PII with $a=q^{2N+1}$ (p-ultradiscrete PII) and a class of its special solutions were investigated in \cite{IKMMS,IS,IST}. Here "p-ultradiscrete" is the abbreviation for "ultradiscrete with parity variables" \cite{MIMS09}, and it is crucial to introduce the parity variables to consider the ultradiscrete limit of solutions in terms of determinants.
The p-ultradiscrete PII is derived as follows (for details, see \cite{IKMMS}).  
We introduce a parameter $\varepsilon>0$ by
\begin{equation}
q=e^{Q/\varepsilon}, \quad Q<0, \label{eq:qtoQ}
\end{equation}
and the parity (or sign) variable $\zeta^{(N)}(m) \in \{+1,1\}$ and the amplitude variable $Z^{(N)}(m)$ by
\begin{equation}
\zeta^{(N)}(m) = \frac{z(q^m)}{|z(q^m)|},\quad |z(q^m)| = e^{Z^{(N)}(m)/\varepsilon},
\end{equation}
respectively.
Moreover, we define a function $s:\{1, -1\} \to \{0, 1\}$ by
\begin{equation}
s(\omega) =
\begin{cases}
1 & (\omega=1) \\
0 & (\omega=-1)
\end{cases}
\end{equation}
and rewrite $z(q^m)$ as
\begin{equation}
z(q^m)=\{s(\zeta^{(N)}(m)) - s(-\zeta^{(N)}(m))\} e^{Z^{(N)}(m)/\varepsilon}. \label{eq:ztozetaZ}
\end{equation}
We deform \eqref{eq:qPII} to a form without division by multiplying $q^m-z(q^m)$ on both sides and substitute \eqref{eq:ztozetaZ} into the resulting equation. 
Then, we collect non-negative terms to each side of equality, apply $\varepsilon \log$ to both sides and take the limit $\varepsilon \to +0$.
By applying the identity
\begin{align}
\lim_{\varepsilon \to +0} \varepsilon \log (s(\omega) e^{X/\varepsilon} + e^{Y/\varepsilon}) = \max (S(\omega) + X, Y),
\end{align}
where the function $S:\{1, -1\} \to \{0, -\infty\}$ is defined by 
\begin{align}
S (\omega) :=
\begin{cases}
0 & (\omega=+1) \\
-\infty & (\omega=-1),
\end{cases}
\end{align}
we obtain the p-ultradiscrete PII
\begin{align}
\max\Bigl[
&Z_{m+1} + 3Z_{m} + Z_{m-1} + S (\zeta_{m+1}\zeta_{m}\zeta_{m-1}), Z_{m+1} + 2Z_{m} + S (\zeta_{m+1}), \nonumber \\
&2Z_{m} + Z_{m-1} + S (\zeta_{m-1}), Z_{m} + S (\zeta_{m}), Z_{m} + (2N+1)Q + 2m Q + S (\zeta_{m}), \nonumber \\
&Z_{m+1} + 2Z_{m} + Z_{m-1} + mQ + S (-\zeta_{m+1}\zeta_{m-1}), \nonumber \\
&Z_{m+1} + Z_{m} + mQ + S (-\zeta_{m+1}\zeta_{m}), Z_{m} + Z_{m-1} + mQ + S (-\zeta_{m}\zeta_{m-1}) \Bigr] \nonumber \\
= \max\Bigl[
&Z_{m+1} + 3Z_{m} + Z_{m-1} + S (-\zeta_{m+1}\zeta_{m}\zeta_{m-1}), Z_{m+1} + 2Z_{m} + S (-\zeta_{m+1}), \nonumber \\
&2Z_{m} + Z_{m-1} + S (-\zeta_{m-1}), Z_{m} + S (-\zeta_{m}), Z_{m} + (2N+1)Q + 2m Q + S (-\zeta_{m}), \nonumber \\
&Z_{m+1} + 2Z_{m} + Z_{m-1} + mQ + S (\zeta_{m+1}\zeta_{m-1}), \nonumber \\
&Z_{m+1} + Z_{m} + mQ + S (\zeta_{m+1}\zeta_{m}), Z_{m} + Z_{m-1} + mQ + S (\zeta_{m}\zeta_{m-1}), mQ \Bigr], \label{eq:udP2}
\end{align}
where $Z_m = Z^{(N)}(m)$ and $\zeta_m = \zeta^{(N)}(m)$. Note that we have derived a simpler expression than that in \cite{IKMMS} by using formula in \cite{TT} for the function $s$;
\begin{align}
s(\zeta)s(\zeta ')+s(-\zeta)s(-\zeta ')=s(\zeta\zeta '),\ (s(\zeta)+s(-\zeta))^2 = 1.
\end{align}
Conversely, the function $\zeta_m e^{Z_m /\varepsilon}$ may approximate a solution of $q$-PII with $q = e^{Q/\varepsilon}$. 
In this paper we obtain several special solutions to p-ultradiscrete PII.
Some of them are described as follows;
\begin{thm}\label{thm0}
Assume that $Q<0$, $N \in \mathbb{Z} _{\ge 0} $, $m_0$ is an integer such that $ m_0\leq \min (-3N-2, -N(N+1)/2 -1) $ and a parameter $C$ satisfies $ - (m_0+ N(N+1))Q <C< (m_0 +1) Q$.
Then the following function $(\zeta^{(N)}(m) , Z^{(N)}(m)) $ is a solution to (\ref{eq:udP2}).\\
(I) If $m \leq m_0 - 2N -1$ or $m_0 + N +1 \leq m$, then 
\begin{equation}
(\zeta^{(N)}(m) , Z^{(N)}(m))
= \begin{cases}
(+1 , (-m-2N-1)Q) & (m \leq m_0 - 2N -1) \\
(+1, mQ) & (m_0 + N +1 \leq m\leq -1) \\
((-1)^m ,0 ) & (m\geq 0)
\end{cases}
\label{eq:thm0asym}
\end{equation}
(II) If $m_0 - 2N \leq m \leq m_0 + N $, then 
\begin{align}
& (\zeta^{(N)}(m) , Z^{(N)}(m))
= \nonumber \\
& \begin{cases}
((-1)^{j} , -C-j^2 Q) & (m=m_0 -2N + 3j) \\
(+1 , (m_0+j+1)Q) & (m=m_0 -2N + 3j +1) \\
((-1)^{j} , C+(j+1)^2 Q) & (m=m_0 -2N + 3j +2)  
\end{cases}
\label{eq:thm03cycle}
\end{align}
where $0 \leq j \leq N$ in the first case and $0 \leq j \leq N -1 $ in the second and the third cases.
\end{thm}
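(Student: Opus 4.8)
The plan is to prove Theorem \ref{thm0} by direct substitution of the proposed pair $(\zeta^{(N)}(m),Z^{(N)}(m))$ into the p-ultradiscrete PII equation \eqref{eq:udP2} and verifying the max-plus identity for every integer $m$. The essential simplification is that $S(-1)=-\infty$, so each of the eight (resp.\ nine) terms on the left (resp.\ right) of \eqref{eq:udP2} is \emph{active} only when the parity argument inside its $S$ equals $+1$; otherwise the term contributes $-\infty$ and may be discarded. The lone term $mQ$ on the right carries no $S$ and is always active, serving as a baseline. Thus for each $m$ the verification reduces to (i) reading off the three signs $\zeta_{m+1},\zeta_m,\zeta_{m-1}$ and the relevant sign products, (ii) keeping only the active terms, and (iii) comparing the two surviving maxima using the explicit amplitude formulas.

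First I would tabulate, for each region appearing in \eqref{eq:thm0asym} and \eqref{eq:thm03cycle}, the values of the four sign products $\zeta_{m+1}\zeta_m\zeta_{m-1}$, $\zeta_{m+1}\zeta_{m-1}$, $\zeta_{m+1}\zeta_m$, $\zeta_m\zeta_{m-1}$ together with the single signs. In the three-cycle region \eqref{eq:thm03cycle} the signs over one period are $(-1)^j,+1,(-1)^j$, so the products are all $+1$ when $j$ is even but several flip to $-1$ when $j$ is odd; in the asymptotic region $m\ge 0$ the alternation $(-1)^m$ must be handled separately. This bookkeeping determines, region by region and according to the parity of $j$ (resp.\ $m$), exactly which terms survive on each side.

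With the active terms identified, I would carry out the comparison region by region, inserting the amplitude values $Z^{(N)}(m)$. Within the far-left region $m\le m_0-2N-1$, within the intermediate region $m_0+N+1\le m\le -1$, within $m\ge0$, and within each residue class of the three-cycle, the two maxima reduce to a short comparison of linear expressions in $mQ$, $C$, and $j^2Q$, which the hypotheses on $m_0$ and $C$ are designed to order correctly. Specifically, the inequality $C<(m_0+1)Q$ and its companion $-(m_0+N(N+1))Q<C$ (recalling $Q<0$) guarantee that the intended maximizing term dominates the spurious ones, while the bound $m_0\le\min(-3N-2,-N(N+1)/2-1)$ keeps the various regions disjoint and correctly ordered.

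The main obstacle is the boundary cases, where $m$ lies in one region while $m+1$ or $m-1$ lies in an adjacent one, so that the three neighbouring pairs $(\zeta,Z)$ are drawn from two different defining formulas. These transitions---between the far-left region and the start of the three-cycle near $m=m_0-2N$, between the end of the three-cycle and the intermediate region near $m=m_0+N$, and across $m=-1,0$ where both the amplitude rule and the sign rule change---require the most careful handling, and it is precisely there that the extreme inequalities coming from the constraints on $m_0$ and $C$ are invoked to secure the correct maximum. Once all interior and boundary cases are checked, each reducing to an elementary comparison of real numbers, the identity \eqref{eq:udP2} holds for every $m$ and the theorem follows.
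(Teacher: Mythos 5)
Your route is genuinely different from the paper's. The paper never verifies \eqref{eq:udP2} term-by-term for the function in Theorem \ref{thm0}: instead it constructs the function as the p-ultradiscrete limit of an actual determinant solution of $q$-PII (seed $w=c_1 a+c_2 b$), so that Lemma \ref{lem1} guarantees the ultradiscrete equation automatically wherever the limit applies. Concretely, Theorem \ref{thm0} is obtained as the special case $k_0=0$, $C=B-A-(m_0^2+m_0)Q$ of Theorem \ref{thm:combined}, whose proof splits into: $m\le -2N-2$ (covered by Theorem \ref{thm4} via Lemma \ref{lem1}), $m\ge m_0+N+2$ (covered because the Ai-type solution \eqref{eq:Ai-type} is already known to solve \eqref{eq:udP2}), and the single leftover point $m=-2N-1$ when $m_0=-3N-2$, which is the only place the paper does the kind of hand check you propose for every $m$. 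Your brute-force substitution, if executed, would buy something the paper's argument does not: a self-contained, elementary proof of Theorem \ref{thm0} that is independent of the Hamamoto--Kajiwara--Witte determinant solutions, of Propositions \ref{prop1}--\ref{thm3}, and of Lemma \ref{lem1}; it would also serve as an independent confirmation of the paper's limit computations. The price is that you must do by hand the case analysis that Lemma \ref{lem1} lets the paper skip.

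That price is also where your proposal currently falls short: as written it is a plan, not a proof. Every substantive step --- the sign/product tables, the identification of active terms, the comparison of the two maxima in each region, and above all the boundary cases you yourself flag as ``the main obstacle'' --- is described but not carried out, and for this theorem the verification \emph{is} the entire content. The case analysis is larger than the proposal suggests: besides the six interior three-cycle cases (three residues $\times$ parity of $j$), one must treat the junctions $m=m_0-2N-1,\,m_0-2N$, the junctions at $m_0+N,\,m_0+N+1$, the crossings $m=-1,0,1$, the parity alternation for $m\ge 1$, and degenerate configurations where regions abut or collapse (e.g.\ $N=0$, where the three-cycle has only the residue-$0$ entries, and $m_0=-3N-2$, where the cycle ends adjacent to $m=-2N-1$); several of these need the strict inequalities $-(m_0+N(N+1))Q<C<(m_0+1)Q$ and $m_0\le\min(-3N-2,-N(N+1)/2-1)$ in an essential way, and without displaying those comparisons one cannot tell that the hypotheses suffice (they do, but that is exactly what must be shown). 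Until at least the boundary cases are written out explicitly, the proposal does not establish the theorem.
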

Let us explain our intention to obtain Theorem \ref{thm0}.
Hamamoto, Kajiwara and Witte \cite{HKW} established that \eqref{eq:qPII} admits a class of special solutions expressed in terms of determinants when $a=q^{2N+1}$ for $N\in\mathbb{Z}$. The elements of the determinants are written by a solution of the $q$-difference Airy equation, or shortly $q$-Airy equation, 
\begin{equation}
w (q \tau) - \tau w (\tau) + w (q^{-1} \tau) = 0. \label{eq:qAiryEq}
\end{equation}
We here call such a solution `seed'.
The  $q$-Airy equation has special solutions, the $q$-Ai function and the $q$-Bi function.
If we give the $q$-Ai function (resp. the $q$-Bi function) as a seed, we have special solutions of $q$-PII which we call the $q$Ai-type solutions (resp. the $q$Bi-type solutions).
In \cite{IST}, special solutions of (\ref{eq:udP2}) have been derived from the $q$Ai- or $q$Bi-type solutions through the limiting procedure. Hence, we call these ultradiscrete solutions as the uAi- and uBi-type solutions, respectively.
In this paper, we take linear combinations of the $q$-Ai and the $q$-Bi functions as seeds. 
Then we obtain the corresponding ultradiscrete solutions, which include Theorem \ref{thm0} as a special case, and they have richer structure than the uAi- and the uBi-type solutions. 
From a different point of view, our ultradiscrete solutions capture explicit behavior of solutions to $q$-PII in terms of determinants as $q \to 0$, and our results may help a understanding of behavior of solutions to $q$-PII.

This paper is organized as follows.  
In section \ref{sec:qAiry}, we review some results for \eqref{eq:qAiryEq} and study the p-ultradiscrete analogue of the general solution to \eqref{eq:qAiryEq}. 
In section \ref{sec3}, we review special solutions to $q$-PII in terms of determinants obtained in \cite{HKW}, and we investigate the p-ultradiscrete limit of these solutions.
As a consequence, we obtain explicit functional forms of the special solutions of p-ultradiscrete PII. 
In section \ref{concl}, we give concluding remarks.

Throughout this paper, we assume $Q<0$.
\section{$q$-Airy equation} \label{sec:qAiry}
We review some results for the $q$-Airy equation \eqref{eq:qAiryEq} and its p-ultradiscrete analogue.
We set $\tau=q^m$ ($m\in\mathbb{Z}$) and rewrite $w(q^m)$ as $w(m)$ for simplicity.
Then, the $q$-Ai and $q$-Bi functions
\begin{align}
a(m) &= q\mbox{-Ai} (q^m)= (-1)^{m(m-1)/2} \sum_{n=0}^{\infty} \frac{(-1)^n (-q)^{n(n+1)/2}}{(q^2;q^2)_n} (-q)^{mn} \\
b(m) &= q\mbox{-Bi} (q^m)=(-1)^{m(m+1)/2} \sum_{n=0}^{\infty} \frac{(-q)^{n(n+1)/2}}{(q^2;q^2)_n} (-q)^{mn},
\end{align}
are special solutions of \eqref{eq:qAiryEq}.
Here, $(q^2;q^2)_n$ is given by
\begin{equation}
(q^2;q^2)_n = 
\begin{cases}
1 & (n=0) \\
(1-q^2)(1-q^4)\cdots(1-q^{2n}) & (n=1,2,3,\ldots),
\end{cases}
\end{equation}
which is the $q$-shifted factorial. 
As $q\to 0$, these solutions are evaluated \cite{IST} as
\begin{align}
a(m) = 
\begin{cases}
(-1)^{m(m-1)/2} (1+O(q)) & (m \geq 0) \\
q^{m(m-1)/2} (1+O(q)) & (m \leq -1)
\end{cases}\label{eq:eval_a}\\
b(m) = 
\begin{cases}
(-1)^{m(m+1)/2} (1+O(q)) & (m \geq 0) \\
q^{-m(m+1)/2} (2+O(q)) & (m \leq -1),
\end{cases}\label{eq:eval_b}
\end{align}
respectively. The general solution for \eqref{eq:qAiryEq} is given by the linear combination
\begin{equation} \label{eq:w}
w(m) = c_1 a(m) + c_2 b(m).
\end{equation}
For ultradiscretization, we introduce a parameter $\varepsilon>0$ by \eqref{eq:qtoQ}.
We define the sign variable by $\omega_m=w(m)/|w(m)| \in \{1, -1\}$ and the amplitude variable by $|w(m)| = e^{W_m/\varepsilon}$.
Then $w(m)$ is written as
\begin{equation}
w(m)=\{s(\omega_m) - s(-\omega_m)\} e^{W_m/\varepsilon}.
\end{equation}
Following the procedure of p-ultradiscretization, we obtain the p-ultradiscrete Airy equation \cite{IKMMS}
\begin{align}
&\max \left(W_{m+1} + S (\omega_{m+1}),\ mQ + W_{m} + S (-\omega_{m}),\ W_{m-1} + S (\omega_{m-1}) \right) \nonumber\\
= &\max \left(W_{m+1} + S (-\omega_{m+1}),\ mQ + W_{m} + S (\omega_{m}),\ W_{m-1} + S (-\omega_{m-1}) \right). \label{eq:ud-Airy}
\end{align}

The p-ultradiscrete analogues of $a(m)$ and $b(m)$ have derived in \cite{IST} through the p-ultradiscrete limit.
We construct solutions of \eqref{eq:ud-Airy} from \eqref{eq:w}. The key is the following lemma which actually appears in \cite{IST}: 
\begin{lem} \label{lem1}
If a solution $w(m)$ for \eqref{eq:qAiryEq} is expanded into a series of $q=e^{Q/\varepsilon}$ as
\begin{equation}
w(m) = (-1)^{\hat{\omega}(m)}e^{C(m)/\varepsilon}q^{\hat{W}(m)}\sum_{k=0}^{\infty}d(k,m)q^k, \label{eq:expand}
\end{equation}
where $\hat{\omega}(m)$, $C(m)$, $\hat{W}(m)$, $d(k,m)$ and $Q(<0)$ are independent of $\varepsilon$ and $d(0,m)>0$.
Then, the pair of the sign variable $\omega_m=(-1)^{\hat{\omega}(m)}$ and the amplitude variable $W_m=\hat{W}(m) Q+C(m)$ solves \eqref{eq:ud-Airy}. $\blacksquare$
\end{lem}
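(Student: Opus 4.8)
The plan is to start from a solution $w(m)$ of the $q$-Airy equation \eqref{eq:qAiryEq} admitting the expansion \eqref{eq:expand}, extract its sign and amplitude data, and then verify directly that these data satisfy the p-ultradiscrete Airy equation \eqref{eq:ud-Airy}. The logical backbone is that \eqref{eq:ud-Airy} is precisely the ultradiscrete limit of \eqref{eq:qAiryEq}, so the claim should reduce to tracking how the leading-order behaviour of each term in \eqref{eq:qAiryEq} transforms under $\varepsilon\log(\cdot)$ as $\varepsilon\to+0$. First I would read off from \eqref{eq:expand} that, under $q=e^{Q/\varepsilon}$ with $d(0,m)>0$, the sign of $w(m)$ is $(-1)^{\hat\omega(m)}$ and $|w(m)|=e^{W_m/\varepsilon}(1+O(q))$ with $W_m=\hat W(m)Q+C(m)$; this is exactly the definition of $\omega_m$ and $W_m$ used in the derivation of \eqref{eq:ud-Airy}, so the identifications in the statement are consistent with the leading exponential of $w(m)$.

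Next I would substitute $w(m)=\{s(\omega_m)-s(-\omega_m)\}e^{W_m/\varepsilon}$ into \eqref{eq:qAiryEq} and carry out the same bookkeeping that produced \eqref{eq:ud-Airy} in the first place. Concretely, I would rewrite the three-term relation $w(q\tau)-\tau w(\tau)+w(q^{-1}\tau)=0$ with $\tau=q^m$ as an equality of two sums of manifestly non-negative quantities, collecting the positive-sign contributions on one side and the negative-sign contributions on the other, using the parity bookkeeping encoded by $s$. Each monomial term then carries a factor of the form $s(\pm\omega)e^{X/\varepsilon}$, and applying $\varepsilon\log$ together with the limit identity $\lim_{\varepsilon\to+0}\varepsilon\log(s(\omega)e^{X/\varepsilon}+e^{Y/\varepsilon})=\max(S(\omega)+X,Y)$ converts the equality of sums into the equality of maxima \eqref{eq:ud-Airy}, with the arguments $W_{m\pm1}+S(\pm\omega_{m\pm1})$ and $mQ+W_m+S(\mp\omega_m)$ appearing exactly as written.

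The step I expect to be the main obstacle is justifying the interchange of the $\varepsilon\to+0$ limit with the exact vanishing of $w(q\tau)-\tau w(\tau)+w(q^{-1}\tau)$: the right-hand side is identically $0$ for every $\varepsilon>0$, so one must argue that the subleading $O(q)$ corrections in \eqref{eq:expand}, while necessary to make the $q$-Airy relation hold exactly, do not affect the leading exponential orders that survive ultradiscretization. The clean way to handle this is to note that $d(0,m)>0$ guarantees the leading term of each $|w(m\pm1)|$ and $|w(m)|$ is genuinely of order $e^{W_{m\pm1}/\varepsilon}$ and $e^{W_m/\varepsilon}$ with the stated signs, so no spurious cancellation lowers the order; then the two sides of the reorganised $q$-Airy identity are each dominated by their respective leading exponentials, and taking $\varepsilon\log$ and the limit picks out exactly the maxima on the two sides of \eqref{eq:ud-Airy}. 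Since this lemma is stated to already appear in \cite{IST}, I would keep the argument at the level of verifying that the hypotheses of \eqref{eq:expand} feed correctly into the limit identity, and cite \cite{IST} for the detailed limiting computation rather than reproducing it term by term.
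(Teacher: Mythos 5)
Your argument is correct, and it is the natural one; note, however, that the paper itself offers no proof of Lemma \ref{lem1} --- the statement is quoted from \cite{IST} and closed with a $\blacksquare$ --- so the only thing to compare against is the p-ultradiscretization procedure sketched in the introduction, which is precisely what you carry out. Two small repairs would make your write-up exact. The substitution in your second paragraph, $w(m)=\{s(\omega_m)-s(-\omega_m)\}e^{W_m/\varepsilon}$, is not an identity when $W_m$ denotes the $\varepsilon$-independent quantity $\hat{W}(m)Q+C(m)$ of the lemma; the clean statement is $w(m)=\{s(\omega_m)-s(-\omega_m)\}\,|w(m)|$ (valid for all small $\varepsilon$ because $d(0,m)>0$ forces $\mathrm{sign}\,w(m)=(-1)^{\hat{\omega}(m)}$ eventually), with $|w(m)|=e^{W_m/\varepsilon}\bigl(d(0,m)+O(q)\bigr)$, so that the rearranged $q$-Airy relation is an exact equality, for every small $\varepsilon>0$, of two finite sums of non-negative terms. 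Your third paragraph then supplies exactly the right closing step: for finite sums of non-negative terms, $\lim_{\varepsilon\to+0}\varepsilon\log$ of the sum equals the maximum of the limits of the summands, and since $\varepsilon\log\bigl(d(0,m)+O(q)\bigr)\to 0$, the summands contribute precisely $W_{m\pm1}+S(\pm\omega_{m\pm1})$ and $mQ+W_m+S(\mp\omega_m)$; equality of the two sides for all $\varepsilon$ gives equality of the two maxima, which is \eqref{eq:ud-Airy}. (Also, your $|w(m)|=e^{W_m/\varepsilon}(1+O(q))$ should read $e^{W_m/\varepsilon}\bigl(d(0,m)+O(q)\bigr)$; this is harmless since $d(0,m)$ is a positive constant independent of $\varepsilon$.)
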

Note that this Lemma can be applied to $q$-PII and its solutions.
We find from Lemma \ref{lem1} that the term $(-1)^{\hat{\omega}(m)}e^{C(m)/\varepsilon}q^{\hat{W}(m)}=(-1)^{\hat{\omega}(m)}e^{\{\hat{W}(m)Q+C(m)\}/\varepsilon}$, which has the lowest order of $q$ in the series \eqref{eq:expand}, is important for our study.  We call this term the \textit{leading term}.
If two functions $x(m)$ and $y(m)$ have the same leading term, we write $x(m) \sim y(m)$. For example, \eqref{eq:expand} is written as $w(m) \sim (-1)^{\hat{\omega}(m)}e^{C(m)/\varepsilon}q^{\hat{W}(m)}$. We define a term \textit{dominant} as follows. We consider some functions $x_k(m)$ $(k=1,2,\ldots,K)$ and write their leading terms as $|x_k(m)|\sim e^{\{\hat{W}_k(m)Q+C_k(m)\}/\varepsilon}$. 
For fixed $m$, $x_k(m)$ is dominant among $x_1(m),\ldots, x_K(m)$, if $\hat{W}_k(m)Q+C_k(m) > \hat{W}_l(m)Q+C_l(m)$ $(l=1,\ldots,k-1,k+1,\ldots,K)$ hold. 

We define the sign variables for the coefficients in \eqref{eq:w} as $\alpha=c_1/|c_1|$ and $\beta=c_2/|c_2|$, respectively, and put
\begin{equation}
c_1 = \alpha e^{A/\varepsilon},\ \ c_2 = \beta e^{B/\varepsilon}. \label{eq:c12toAB}
\end{equation}
Then, \eqref{eq:w} is evaluated as
\begin{equation}
w(m) =
\begin{cases}
\alpha(-1)^{m(m-1)/2} e^{A /\varepsilon} (1+O(q)) & (m\ge 0, A>B) \\
\beta (-1)^{m(m+1)2} e^{B/\varepsilon} (1+O(q))& (m\ge 0, A<B) \\
\alpha e^{A/\varepsilon} q^{m(m-1)/2} (1+O(q)) & (m\le -1, m^2<(B-A)/Q) \\
\beta e^{B/\varepsilon} q^{-m(m+1)/2} (2+O(q)) & (m\le -1, m^2\ge (B-A)/Q),
\end{cases}\label{eq:w_ev}
\end{equation}
(see \cite{IST}).
For simplicity, we consider the case $B-A<Q$ in this paper.
Then, there uniquely exists $m_0\in\mathbb{Z}_{\le -2}$ which satisfies 
\begin{align}
{m_0}^2Q \le B-A<(m_0+1)^2Q. \label{eq:m_0}
\end{align}
Hence, for $B-A<Q$, the p-ultradiscrete analogue of $w(m)$ is written as
\begin{equation}
(\omega_m, W_m) =
\begin{cases}
\left(\alpha (-1)^{m(m-1)/2}, A\right) & (m\ge 0) \\
\left(\alpha, m(m-1)Q/2+A\right) & \left( m_0 +1 \le m \le -1 \right) \\
\left(\beta, -m(m+1)Q/2+B\right) & \left(m \le m_0 \right) 
\end{cases}
\end{equation}
and it satisfies \eqref{eq:ud-Airy} by Lemma \ref{lem1}.
\section{$q$-Painlev\'{e} II equation}\label{sec3}
It is shown in \cite{HKW} that \eqref{eq:qPII} with $a=q^{2N+1}$ $(N\in\mathbb{Z})$ admits a class of special solutions. For later discussion, we review some results only for $N\ge 0$.
Equation \eqref{eq:qPII} with $a=q^{2N+1}$ is solved by
\begin{align}
z^{(N)} (\tau) &= 
\dfrac{g^{(N)} (\tau) g^{(N+1)} (q \tau)}{q^N g^{(N)} (q \tau) g^{(N+1)} (\tau)}\label{eq:gtoz}\\
g^{(N)} (\tau) &= 
\begin{cases}
\begin{vmatrix}
w(\tau) & w(q^2 \tau) & \cdots & w(q^{2N-2} \tau) \\
w(q^{-1} \tau) & w(q \tau) & \cdots & w(q^{2N-3} \tau) \\
\vdots & \vdots & \ddots & \vdots \\
w(q^{1-N} \tau) & w(q^{3-N} \tau) & \cdots & w(q^{N-1} \tau)
\end{vmatrix} & (N>0) \\
1 & (N=0),
\end{cases}\label{eq:g_det}
\end{align}
where $w(\tau)$ is a solution of \eqref{eq:qAiryEq}. The functions $g^{(N)} (\tau)$ satisfy the bilinear equations 
\begin{align}
&q^{2N} g^{(N+1)} (q^{-1} \tau) g^{(N)} (q^2 \tau) - q^{N} \tau g^{(N+1)} (\tau) g^{(N)} (q \tau) + g^{(N+1)} (q \tau) g^{(N)} (\tau) = 0 \\
&q^{2N} g^{(N+1)} (q^{-1} \tau) g^{(N)} (q \tau) - q^{2N} \tau g^{(N+1)} (\tau) g^{(N)} (\tau) + g^{(N+1)} (q \tau) g^{(N)} (q^{-1} \tau) = 0. 
\end{align}
Hereafter, we put $\tau=q^m$ and write $g^{(N)}(\tau) = g^{(N)}(m)$. 
The p-ultradiscrete limits of $g^{(N)}(m)$ for the special cases $w(m)=a(m)$ or $w(m)=b(m)$ have been investigated in \cite{IST}. We study the case that $w(m)$ is the general solution \eqref{eq:w}. We often consider the case of $m\ll -1$ for simplicity. However, we observe interesting structure of the solutions even in this case. 
\subsection{Evaluation for $g^{(N)}(m)$}
Set 
\begin{align}
\boldsymbol{a}_m = (a(m) ~~ a(m+2) ~~ \cdots ~~ a(m+2N-2))\\
\boldsymbol{b}_m = (b(m) ~~ b(m+2) ~~ \cdots ~~ b(m+2N-2)). \nonumber
\end{align}
By substituting \eqref{eq:w} into \eqref{eq:g_det}, we have
\begin{equation}
g^{(N)}(m) = 
\begin{vmatrix}
c_1 \boldsymbol{a}_m + c_2 \boldsymbol{b}_m \\
c_1 \boldsymbol{a}_{m-1} + c_2 \boldsymbol{b}_{m-1}\\
\vdots \\
c_1 \boldsymbol{a}_{m-N+1} + c_2 \boldsymbol{b}_{m-N+1}
\end{vmatrix}.\label{eq:gBYab}
\end{equation}
We expand \eqref{eq:gBYab} by employing multi-linearity of the determinant and introduce notation $g^{(N)}_{*\cdots *}(m)$ as 
\begin{align}
g^{(N)}(m)
&= c_1^N \left|
\begin{array}{c}
\boldsymbol{a}_m \\ \vdots \\ \boldsymbol{a}_{m-N+1} \\
\end{array}
\right| \nonumber \\
&+ c_1^{N-1} c_2 \left\{ \left|
\begin{array}{c}
\boldsymbol{a}_m \\ \vdots \\ \boldsymbol{a}_{m-N+2} \\ \boldsymbol{b}_{m-N+1} \\
\end{array}
\right| + \left|
\begin{array}{c}
\boldsymbol{a}_m \\ \vdots \\ \boldsymbol{a}_{m-N+3} \\ \boldsymbol{b}_{m-N+2} \\ \boldsymbol{a}_{m-N+1} \\
\end{array}
\right| + \cdots + \left|
\begin{array}{c}
\boldsymbol{b}_m \\ \boldsymbol{a}_{m-1} \\ \vdots\\ \boldsymbol{a}_{m-N+1} \\
\end{array}
\right| \right\} \nonumber\\
&+ c_1^{N-2} c_2^2 \left\{ \left|
\begin{array}{c}
\boldsymbol{a}_m \\ \vdots \\ \boldsymbol{a}_{m-N+3} \\ \boldsymbol{b}_{m-N+2} \\ \boldsymbol{b}_{m-N+1} \\
\end{array}
\right| + \left|
\begin{array}{c}
\boldsymbol{a}_m \\ \vdots \\ \boldsymbol{a}_{m-N+4} \\ \boldsymbol{b}_{m-N+3} \\ \boldsymbol{a}_{m-N+2} \\ \boldsymbol{b}_{m-N+1} \\
\end{array}
\right| + \cdots + \left|
\begin{array}{c}
\boldsymbol{b}_m \\ \boldsymbol{b}_{m-1} \\ \boldsymbol{a}_{m-2} \\ \vdots\\ \boldsymbol{a}_{m-N+1} \\
\end{array}
\right| \right\} \nonumber\\
&+ \cdots + c_2^N \left|
\begin{array}{c}
\boldsymbol{b}_m \\ \vdots \\ \boldsymbol{b}_{m-N+1} \\
\end{array}
\right| \\
&=: c_1^N g^{(N)}_{a\cdots  a}(m) +  c_1^{N-1} c_2 \left\{ g^{(N)}_{a\cdots  ab}(m) + g^{(N)}_{a\cdots  aba}(m) + \cdots + g^{(N)}_{ba\cdots  a}(m) \right\} \nonumber \\
&+ c_1^{N-2} c_2^2 \left\{ g^{(N)}_{a\cdots  abb}(m) + g^{(N)}_{a\cdots  abab}(m) + \cdots + g^{(N)}_{bba\cdots  a}(m) \right\} \nonumber\\
&+ \cdots + c_2^N g^{(N)}_{b\cdots b}(m). 
\end{align}
We consider only the case in which all $a$'s and $b$'s in \eqref{eq:gBYab} have negative arguments, i.e., $m\le -2N +1$. To calculate the leading term of each $g^{(N)}_{*\cdots *}(m)$, we present the following two propositions. 
\begin{prop}\label{prop1}
Consider $g^{(N)}_{*\cdots *}(m)$ with $N-k$ $\boldsymbol{a}$'s and $k$ $\boldsymbol{b}$'s. By rearranging rows, $g^{(N)}_{*\cdots *}(m)$ can be written as
\begin{equation}
g^{(N)}_{*\cdots *}(m) =
\pm \left|
\begin{array}{c}
\boldsymbol{a}_{m-n_1} \\ \vdots \\ \boldsymbol{a}_{m-n_{N-k}} \\ \boldsymbol{b}_{m-n_1'} \\ \vdots \\ \boldsymbol{b}_{m-n_k'} \\
\end{array}
\right|
= \pm \left|
\begin{array}{cc}
A_{N-k,k} & A_{N-k,N-k} \\ B_{k,k} & B_{k,N-k} \\ 
\end{array}
\right|,
\end{equation}
where $n_1<\cdots<n_{N-k}$ and $n_1'<\cdots<n_k'$. Then, for $m\le -2N +1$, the leading term of $g^{(N)}_{*\cdots *}(m)$ as $q\to 0$ is given by the product of the diagonal terms of $A_{N-k,N-k}$ and the anti-diagonal terms of $B_{k,k}$. 
\end{prop}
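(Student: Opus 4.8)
The plan is to reduce the claim to a combinatorial optimization over the permutation expansion of the determinant. Writing $M$ for the $N\times N$ matrix in the displayed block form, each entry $M_{ij}$ is one of the numbers $a(\cdot)$ or $b(\cdot)$, and by \eqref{eq:eval_a}--\eqref{eq:eval_b} its leading term is a nonzero constant times $q^{\mu_{ij}}$, where for a negative argument $x$ one has $\mathrm{ord}\,a(x)=x(x-1)/2$ and $\mathrm{ord}\,b(x)=-x(x+1)/2$. Since $q=e^{Q/\varepsilon}\to 0^+$, the leading term of $\det M=\sum_{\sigma}\mathrm{sgn}(\sigma)\prod_i M_{i\sigma(i)}$ is controlled by the permutation(s) that minimize $\sum_i \mu_{i\sigma(i)}$; if such a minimizer is unique, no cancellation of lowest-order contributions can occur, and the leading term of $g^{(N)}_{*\cdots*}(m)$ is exactly the corresponding signed product. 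First I would therefore write out $\mu_{ij}$ explicitly and prove that the minimizing $\sigma$ is unique and coincides with the main diagonal of $A_{N-k,N-k}$ together with the anti-diagonal of $B_{k,k}$.

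The hypothesis $m\le -2N+1$ guarantees that every argument occurring in $M$ is $\le -1$, so on the whole relevant range the cost $f(x)=x(x-1)/2$ attached to the $\boldsymbol a$-rows is strictly convex and strictly decreasing, while the cost $g(x)=-x(x+1)/2$ attached to the $\boldsymbol b$-rows is strictly concave and strictly increasing. The next step is a cross-block exchange argument. Suppose some $\sigma$ places an $\boldsymbol a$-row at a column of shift $2j$ and a $\boldsymbol b$-row at a column of shift $2j'$ with $j<j'$; swapping the two column assignments changes the total exponent by $\bigl(f(\mathrm{base}+2j')-f(\mathrm{base}+2j)\bigr)+\bigl(g(\mathrm{base}'+2j)-g(\mathrm{base}'+2j')\bigr)$, and both brackets are strictly negative because $f$ is decreasing and $g$ is increasing. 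Hence in the unique minimizer every $\boldsymbol a$-row lies to the right of every $\boldsymbol b$-row; as the columns carry the distinct shifts $0,2,\dots,2(N-1)$, this forces the $\boldsymbol a$-rows onto the last $N-k$ columns (the block $A_{N-k,N-k}$) and the $\boldsymbol b$-rows onto the first $k$ columns (the block $B_{k,k}$).

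It then remains to optimize within each block. Inside $A_{N-k,N-k}$, ordering rows by decreasing base argument and columns by increasing shift, the cost matrix $\bigl(f(\mathrm{base}_i+\mathrm{shift}_j)\bigr)$ satisfies the Monge condition $C_{ij}+C_{i'j'}\le C_{ij'}+C_{i'j}$ for $i<i'$, $j<j'$, which holds strictly because $f$ is strictly convex; this forces the identity permutation, i.e.\ the main diagonal, to be the unique minimizer of the assignment problem. Inside $B_{k,k}$ the analogous cost matrix built from the strictly concave $g$ satisfies the reversed strict inequality, so its assignment is uniquely minimized by the reversal permutation, i.e.\ the anti-diagonal. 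Combining the three steps yields a single globally minimizing permutation whose signed product is precisely the product of the diagonal entries of $A_{N-k,N-k}$ and the anti-diagonal entries of $B_{k,k}$, as asserted. I expect the main obstacle to be securing \emph{strictness} at every stage---in the exchange step and in both Monge comparisons---since it is exactly the uniqueness of the minimizer that rules out cancellation and thereby upgrades an order estimate to the exact product formula; this is where the convexity and concavity of $f$ and $g$ on the range $x\le -1$, guaranteed by $m\le -2N+1$, are used decisively.
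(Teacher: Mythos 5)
Your proposal is correct, and it follows the same overall strategy as the paper---locate the unique minimal-order monomial in the determinant by first separating the $\boldsymbol{a}$-rows from the $\boldsymbol{b}$-rows across columns and then optimizing within each block---but it is substantially more self-contained than the paper's own proof. The paper argues in three lines: monotonicity of $a(m)$ (increasing in $m$) and $b(m)$ (decreasing in $m$) for $m<0$, combined with a Laplace expansion along the $\boldsymbol{a}$-rows, shows that the minimal order occurs in $|A_{N-k,N-k}|\times|B_{k,k}|$; the claims that the diagonal of $A_{N-k,N-k}$ and the anti-diagonal of $B_{k,k}$ carry the minimal order within the blocks are then quoted from \cite{IST} rather than proved. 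Your cross-block exchange argument uses the same monotonicity input ($f$ strictly decreasing, $g$ strictly increasing on $x\le -1$), so that step is a rigorous rewriting of the paper's Laplace-expansion remark; your within-block step, however, is genuinely new relative to this paper: the strict Monge condition coming from strict convexity of $f(x)=x(x-1)/2$, and the reversed condition from strict concavity of $g(x)=-x(x+1)/2$, replace the citation with an actual proof, and both verifications are sound (as is the check that $m\le -2N+1$ keeps every argument $\le -1$). You are also more careful than the paper on a point it leaves implicit: strictness of every comparison yields a \emph{unique} minimizing permutation, and since the entries' leading coefficients ($1$ for $a$, $2$ for $b$) are positive constants, uniqueness is exactly what rules out cancellation among lowest-order contributions and upgrades an order estimate to the exact leading-term formula. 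What the paper's version buys is brevity, by leaning on \cite{IST}; what yours buys is a complete argument local to the proposition, phrased as an assignment problem with Monge costs, which would generalize directly to any seeds whose order exponents are strictly convex decreasing or strictly concave increasing.
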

\begin{proof}
It is readily shown that $a(m)$ and $b(m)$ monotonically increases and decreases for $m<0$, respectively. Therefore, considering Laplace expansion of $g^{(N)}_{*\cdots *}(m)$, we find that $|A_{N-k,N-k}|\times |B_{k,k}|$ contains the term with the minimum order. Among the monomial in $A_{N-k,N-k}$, the product of the diagonal terms has the minimum order \cite{IST}. Similarly, the product of the anti-diagonal terms has the minimum order among the monomial in $|B_{k,k}|$. 
\end{proof}
\begin{prop}\label{prop2}
Among $g^{(N)}_{*\cdots *}(m)$ with $N-k$ $\boldsymbol{a}$'s and $k$ $\boldsymbol{b}$'s, $g^{(N)}_{a\cdots ab\cdots b}(m)$ is dominant for $m\le -2N+1$. 
\end{prop}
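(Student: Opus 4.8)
The plan is to use Proposition~\ref{prop1} to reduce the claim to a purely combinatorial inequality about leading $q$-orders, and then to prove that inequality by an exchange (adjacent-transposition) argument. For a pattern $P$ (an assignment of type $\boldsymbol a$ or $\boldsymbol b$ to each of the $N$ rows, with exactly $k$ rows of type $\boldsymbol b$), Proposition~\ref{prop1} gives the leading term explicitly. Writing the $q$-order of $a(\mu)$ as $\phi_a(\mu)=\mu(\mu-1)/2$ and that of $b(\mu)$ as $\phi_b(\mu)=-\mu(\mu+1)/2$ (valid for all arguments $\mu\le -1$, which holds here: since $m\le -2N+1$, every entry has argument $\le m+2N-2\le -1$, so \eqref{eq:eval_a}--\eqref{eq:eval_b} apply), the leading $q$-order of $P$ is
\[
\Phi(P)=\sum_{p=1}^{N-k}\phi_a\bigl(m-n_p+2(k+p-1)\bigr)+\sum_{r=1}^{k}\phi_b\bigl(m-n_r'+2(k-r)\bigr),
\]
where $n_1<\cdots<n_{N-k}$ are the $\boldsymbol a$-positions and $n_1'<\cdots<n_k'$ the $\boldsymbol b$-positions. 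Since being \emph{dominant} means having the smallest such $q$-order, it suffices to show that $\Phi(P)$ is \emph{strictly} minimized by the canonical pattern $a\cdots ab\cdots b$.

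The canonical pattern is exactly the one with no \emph{descent}, i.e.\ no index $i$ for which row $i$ has type $\boldsymbol b$ and row $i+1$ has type $\boldsymbol a$; every other pattern has at least one such descent. I would therefore compare $\Phi(P)$ with $\Phi(P')$, where $P'$ is obtained from $P$ by turning one descent $(\boldsymbol b,\boldsymbol a)$ at positions $(i,i+1)$ into $(\boldsymbol a,\boldsymbol b)$. The key observation is that this swap preserves the \emph{rank} of every row other than the two that are moved: an unaffected $\boldsymbol a$-row keeps the same number of $\boldsymbol a$-positions below it, and likewise for the $\boldsymbol b$-rows. Because in Proposition~\ref{prop1} the column assigned to a row (hence the argument of the entry it contributes to the leading term) depends only on its rank, every term of $\Phi$ except the two associated with the swapped rows is unchanged.

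It then remains to evaluate the change in those two terms. The moved $\boldsymbol a$-entry has its argument increased by $1$ (its position goes from $i+1$ to $i$) while the moved $\boldsymbol b$-entry has its argument decreased by $1$. Using the elementary differences $\phi_a(\mu+1)-\phi_a(\mu)=\mu$ and $\phi_b(\mu-1)-\phi_b(\mu)=\mu$, one finds
\[
\Phi(P')-\Phi(P)=\alpha+\beta,
\]
where $\alpha$ and $\beta$ are the old (i.e.\ pattern-$P$) arguments of the two swapped entries. Both are genuine matrix arguments, hence $\le -1$, so $\Phi(P')-\Phi(P)\le -2<0$: each descent-removing swap strictly lowers the leading $q$-order. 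Iterating (a bubble sort) turns any pattern into the canonical one while strictly decreasing $\Phi$ at every step, which proves that $g^{(N)}_{a\cdots ab\cdots b}(m)$ is strictly dominant.

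I expect the main obstacle to be the rank-preservation bookkeeping in the second step: carefully justifying that a single adjacent swap leaves the Proposition~\ref{prop1} column assignment of all the other rows intact, so that $\Phi(P')-\Phi(P)$ collapses to the two-term expression $\alpha+\beta$. Once that is in place the sign computation is immediate. As a sanity check, the same conclusion follows from the global identity $\mathrm{cost}=\tfrac12\bigl[\sum_{\boldsymbol a}u_i^2-\sum_{\boldsymbol b}u_i^2-\text{const}\bigr]$, with $u_i$ the entry arguments: since all $u_i<0$, the cost is lowered by giving the $\boldsymbol a$-rows the least negative and the $\boldsymbol b$-rows the most negative arguments, which is precisely the canonical pattern.
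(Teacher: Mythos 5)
Your proof is correct and, while it shares the paper's starting point --- both use Proposition~\ref{prop1} together with \eqref{eq:eval_a}--\eqref{eq:eval_b} to reduce dominance to minimizing the leading $q$-exponent over row patterns --- the minimization itself is carried out by a genuinely different argument. The paper encodes the pattern as a permutation $\sigma$ with two increasing runs, expands the exponent into explicit quadratics in the $\sigma_i$ (see \eqref{eq:prop2-1}), and deduces optimality of \eqref{eq:min_sigma} from a monotonicity-in-the-index property of those quadratics; this is a global rearrangement argument, stated rather loosely (``$\sigma_l$ should be smaller as $l$ is smaller''), but it has the side benefit of producing the explicit exponent that essentially reappears as $G^{(N)}_k(m)$ in Proposition~\ref{thm1}. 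You instead run a local exchange argument: after checking that an adjacent $(\boldsymbol b,\boldsymbol a)\to(\boldsymbol a,\boldsymbol b)$ swap preserves the rank --- hence the Proposition~\ref{prop1} column assignment --- of every other row, the exponent changes by exactly $\mu_a+\mu_b\le -2<0$, where $\mu_a,\mu_b\le -1$ are the old arguments of the two swapped entries, and bubble sorting then gives strict minimality of the pattern $a\cdots ab\cdots b$. Your rank-preservation step, which you rightly single out as the crux, is sound exactly as you argue it: positions $j<i$ are untouched, positions $j>i+1$ see the same multiset of types at $\{i,i+1\}$, and the two swapped rows keep their ranks while their arguments shift by $\pm 1$. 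What your route buys is rigor and strictness with almost no computation (only $\phi_a(\mu+1)-\phi_a(\mu)=\mu$ and $\phi_b(\mu-1)-\phi_b(\mu)=\mu$), and incidentally your $\Phi(P)$ is the correct application of Proposition~\ref{prop1} (the $b$-arguments displayed in the paper's own proof carry minor index slips); what it does not give is the closed-form exponent, which the paper's heavier computation supplies and which is needed later anyway.
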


\begin{proof}
By rearranging rows, $g^{(N)}_{*\cdots *}(m)$ with $N-k$ $\boldsymbol{a}$'s and $k$ $\boldsymbol{b}$'s is deformed as, except for the sign, 
\begin{align}
\begin{vmatrix}
\boldsymbol{a}_{n-\sigma_1+1} \\
\vdots \\
\boldsymbol{a}_{n-\sigma_{N-k}+1} \\
\boldsymbol{b}_{n-\sigma_{N-k+1}+1} \\
\vdots \\
\boldsymbol{b}_{n-\sigma_{N}+1} 
\end{vmatrix},\label{eq:g_rearrange}
\end{align}
where $\sigma$ is an appropriate permutation which satisfies
\begin{align}
\sigma_1 <\sigma_{2}<\cdots<\sigma_{N-k},\ \sigma_{N-k+1}<\sigma_{N-k+2}<\cdots<\sigma_{N}. \label{eq:sigma_cons.2}
\end{align}
Our aim is to show that the permutation
\begin{align}
\sigma_i = i\ \ (i=1,2,\ldots, N) \label{eq:min_sigma}
\end{align}
makes the corresponding $g^{(N)}_{*\cdots *}(m)$ dominant among all of the permutations.
From Proposition \ref{prop1}, \eqref{eq:eval_a} and \eqref{eq:eval_b}, we find that the leading term of \eqref{eq:g_rearrange} is given by
\begin{align}
&\prod_{l=1}^{N-k} a(m-\sigma_l+2k-1+2l) \prod_{\lambda=1}^{k} b(m-\sigma_{N-k+\lambda}+2k-1-2\lambda) \nonumber \\
\sim &\prod_{l=1}^{N-k} q^{(m-\sigma_l+2k-1+2l)(m-\sigma_l+2k-2+2l)/2} \prod_{\lambda=1}^{k} 2q^{-(m-\sigma_{N-k+\lambda}+2k-2\lambda)(m-\sigma_{N-k+\lambda}+2k-3\lambda)/2} \nonumber \\
= & {}2^k q^{F+\sum_{l=1}^{N-k}\{\sigma_{l}^2-(2m+4k+4l-3)\sigma_l\} /2-\sum_{\lambda=1}^{k}\{\sigma_{N-k+\lambda}^2-(2m+4k-4\lambda-1)\sigma_{N-k+\lambda}\} /2}, \label{eq:prop2-1}
\end{align}
where we write the terms which are independent of $l$ or $\lambda$ as $F=F(N,k,n)$. Let us study the permutation $\sigma$ at which the exponent in \eqref{eq:prop2-1} reaches the minimum value. Noticing the first summation in the exponent in \eqref{eq:prop2-1}, we consider the quadratic
\begin{align}
f_l(\sigma) = \sigma\{\sigma-(2m+4k+4l-3)\}, 
\end{align}
where $m$ and $k$ are fixed. For $\sigma >0$, $f_l(\sigma) > f_{l'}(\sigma)$ holds if $l < l'$. Hence, $\sigma_l$ $(l=1,2,\ldots, N-k)$ should be smaller as $l$ is smaller. That is, $\sigma_i = i$ $(i=1,2,\ldots, N-k)$ is implied. From the second summation, we focus on 
\begin{align}
f_\lambda (\sigma) = -\sigma\{\sigma-(2m+4k-4\lambda-1)\}. 
\end{align}
For $\sigma >0$, we have $f_\lambda(\sigma) > f_{\lambda'}(\sigma)$ if $\lambda > \lambda'$. Therefore, $\sigma_\lambda$ $(\lambda=N-k+1,\ldots, N)$ should be smaller as $\lambda$ is larger. Then, $\sigma_i = i$ $(i=N-k+1,\ldots, N)$ is implied. Since these implication does not have contradiction, we find that \eqref{eq:min_sigma} achieves the minimum value of \eqref{eq:prop2-1}. 
\end{proof}

We study the p-ultradiscrete analogue of $g^{(N)}_{*\cdots *}(m)$. For simplicity, we write $g^{(N)}_{a\cdots ab\cdots b}(m)$ with $k$ $\boldsymbol{b}$'s as
\begin{equation}
g^{(N)}_{a\cdots ab\cdots b}(m) = g^{(N)}_{k}(m).
\end{equation}
For ultradiscretization, we put \eqref{eq:qtoQ} and \eqref{eq:c12toAB}. 
We introduce the sign variables $\gamma^{(N)}(m)$ and $\gamma^{(N)}_k(m)$ for $g^{(N)}(m)$ and $g^{(N)}_k(m)$, respectively. Then, we put
\begin{align}
g^{(N)}(m) = \{s(\gamma^{(N)}(m))-s(-\gamma^{(N)}(m))\}e^{G^{(N)}(m)/\varepsilon} \\
g^{(N)}_k(m) = \{s(\gamma^{(N)}_k(m))-s(-\gamma^{(N)}_k(m))\}e^{G^{(N)}_k(m)/\varepsilon}.
\end{align} 
One can evaluate $g^{(N)}(m)$ by using Propositions \ref{prop1} and \ref{prop2}. Then, applying Lemma \ref{lem1}, we have the following proposition;
\begin{prop}\label{thm1}
For $m \le -2N+1 $, The p-ultradiscrete analogue of $g^{(N)}(m)$ is given by
\begin{align}
(\gamma^{(N)}(m) , G^{(N)}(m)) = 
\begin{cases}
(\gamma_0^{(N)}(m) , G_0^{(N)}(m)) & (B-A < f_1^{(N)}(m)) \\
(\gamma_1^{(N)}(m) , G_1^{(N)}(m)) & (f_1^{(N)}(m) \le B-A < f_2^{(N)}(m)) \\
\vdots \\
(\gamma_k^{(N)}(m) , G_k^{(N)}(m)) & (f_k^{(N)}(m) \le B-A < f_{k+1}^{(N)}(m)) \\
\vdots \\
(\gamma_{N-1}^{(N)}(m) , G_{N-1}^{(N)}(m)) & (f_{N-1}^{(N)}(m) \le B-A < f_N^{(N)}(m)) \\
(\gamma_N^{(N)}(m) , G_N^{(N)}(m)) & (f_N^{(N)}(m) \le B-A ),
\end{cases}\label{eq:ud_G}
\end{align}
where
\begin{align}
\gamma_k^{(N)}(m) &= (-1)^{Nk-k(k+1)/2} \label{eq:gamma_k}\\
G_k^{(N)}(m)
&= (N-k)A + kB \nonumber\\
&+ \biggl[ \left( -k + \frac12 N \right)m^2 + \left\{-3k^2 + (2N+1)k + \frac{N(N-2)}{2} \right\}m \nonumber \\
&- \frac83k^3 + \left(2N+\frac32 \right)k^2 + \left(-N+\frac16\right)k + \frac{N(N-1)(N-2)}{6} \biggr] Q  \label{eq:G_k}\\
f_k^{(N)}(m) &= G_{k-1}^{(N)}(m) - G_k^{(N)}(m) - A + B \nonumber\\
&= \left\{ (m-(N-3k+2))^2 - (N-k)(N-k+1) \right\} Q \label{eq:f_k}
\end{align}
for $k = 0 , 1 , \cdots , N$. 
\end{prop}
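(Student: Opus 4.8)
The plan is to assemble the three preceding results into a single leading-term computation. First I would use Proposition~\ref{prop2} to collapse the full multilinear expansion of $g^{(N)}(m)$ to its dominant constituents: for each fixed number $k$ of $\boldsymbol{b}$-rows ($k=0,1,\dots,N$), the ordered term $g_k^{(N)}(m)=g^{(N)}_{a\cdots ab\cdots b}(m)$ dominates every other arrangement carrying $k$ copies of $\boldsymbol{b}$, so the leading term of $g^{(N)}(m)$ must coincide with that of whichever of the $N+1$ products $c_1^{N-k}c_2^{k}g_k^{(N)}(m)$ is dominant. The hypothesis $m\le -2N+1$ is what makes this uniform: it forces every entry $a(\cdot),b(\cdot)$ occurring in the determinants to have a negative argument, so the $m\le-1$ branches of \eqref{eq:eval_a} and \eqref{eq:eval_b} apply throughout without case splitting.

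Second, I would evaluate the leading term of $c_1^{N-k}c_2^{k}g_k^{(N)}(m)$ explicitly. Substituting \eqref{eq:c12toAB} contributes the amplitude $(N-k)A+kB$, which supplies the first line of \eqref{eq:G_k}. For the determinantal factor I would invoke Proposition~\ref{prop1}: its leading term is the product of the diagonal entries of the $A$-block (the $N-k$ $\boldsymbol{a}$-rows) and the anti-diagonal entries of the $B$-block (the $k$ $\boldsymbol{b}$-rows). Feeding in the $q$-powers from the $m\le-1$ branches of \eqref{eq:eval_a},\eqref{eq:eval_b}, the total order of $q$ is a sum of two arithmetic families of exponents; consolidating these sums produces the cubic-in-$k$ coefficient of $Q$ in \eqref{eq:G_k}, while the sign $\gamma_k^{(N)}(m)$ of $g_k^{(N)}(m)$ is the product of the parity of the permutation rearranging the rows into block form and the parity $(-1)^{k(k-1)/2}$ of the anti-diagonal permutation, which I would check reduces to the exponent $Nk-k(k+1)/2$ of \eqref{eq:gamma_k} (the positive leading coefficients of $a,b$ at negative arguments guaranteeing no cancellation in the product). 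This bookkeeping is the most laborious part, and I expect the sign tally to be the genuine obstacle.

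Third, I would determine the dominant index $k$ as $B-A$ varies. Comparing consecutive amplitudes gives $G_k^{(N)}(m)-G_{k-1}^{(N)}(m)=(B-A)-f_k^{(N)}(m)$, with $f_k^{(N)}(m)$ exactly as in \eqref{eq:f_k}. The decisive structural fact is that the thresholds are ordered, $f_1^{(N)}(m)<\cdots<f_N^{(N)}(m)$: writing $h(k)$ for the bracket in \eqref{eq:f_k}, a direct difference computation yields $h(k+1)-h(k)=6m-4N+16k-3$, which for $0\le k\le N-1$ is at most $16(k-N)+3\le -13<0$ precisely because $m\le-2N+1$. Hence the increments $G_k-G_{k-1}$ are strictly decreasing in $k$, the sequence $\{G_k^{(N)}(m)\}$ is concave, and the band $f_k^{(N)}(m)\le B-A<f_{k+1}^{(N)}(m)$ isolates the unique maximizer~$k$.

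Finally, I would apply Lemma~\ref{lem1} to the dominant leading term: identifying $(-1)^{\hat\omega(m)}$ with $\gamma_k^{(N)}(m)$ and $\hat W(m)Q+C(m)$ with $G_k^{(N)}(m)$ shows that the pair $(\gamma^{(N)}(m),G^{(N)}(m))$ extracted from $g^{(N)}(m)$ satisfies the relevant p-ultradiscrete relation and agrees, band by band, with the piecewise formula \eqref{eq:ud_G}. The coefficient signs enter only as an overall factor $\alpha^{N-k}\beta^{k}$, so under the sign convention of the proposition the sign reduces to $\gamma_k^{(N)}(m)$, completing the identification.
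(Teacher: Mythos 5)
Your overall strategy coincides with the paper's: collapse the multilinear expansion via Proposition~\ref{prop2}, compute the leading term of each $c_1^{N-k}c_2^{k}g_k^{(N)}(m)$ via Proposition~\ref{prop1} together with \eqref{eq:eval_a}--\eqref{eq:eval_b}, locate the dominant index $k$ as a function of $B-A$, and finish with Lemma~\ref{lem1}. Your third step is a clean variant of the paper's Lemma~\ref{lem2}: where the paper shows that the adjacent comparisons $G_k>G_{k\pm1}$ already imply dominance over all $k'$ (by checking $F(1)-F(l)>0$), you instead prove the thresholds $f_k^{(N)}(m)$ are strictly increasing in $k$; your computation $h(k+1)-h(k)=6m-4N+16k-3<0$ under $m\le-2N+1$ and $Q<0$ is correct, so $\{G_k^{(N)}(m)\}_k$ is strictly concave and adjacent comparisons suffice. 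The two arguments are equivalent in content.

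There is, however, a genuine gap: the bands in \eqref{eq:ud_G} are closed on the left, and at the endpoint $B-A=f_k^{(N)}(m)$ your claim of a \emph{unique} maximizer is false --- concavity then gives the two-way tie $G_{k-1}^{(N)}(m)=G_k^{(N)}(m)>G_{k'}^{(N)}(m)$ for all other $k'$. At a tie, Lemma~\ref{lem1} cannot be applied to a single term: one must sum the two leading contributions and verify they do not cancel, which is precisely the kind of sign cancellation that p-ultradiscretization exists to control. The paper settles this point explicitly: because $b(m)$ carries the prefactor $2+O(q)$ for $m\le-1$, the two tied terms enter with weights $2^{k-1}$ and $2^{k}$, so their sum is $2^{k-1}\bigl(\gamma_{k-1}^{(N)}(m)+2\gamma_k^{(N)}(m)\bigr)e^{G_k^{(N)}(m)/\varepsilon}$; since $\gamma_{k-1}^{(N)}(m),\gamma_k^{(N)}(m)\in\{1,-1\}$, the bracket takes values in $\{\pm1,\pm3\}$, is never zero, and has the sign of $\gamma_k^{(N)}(m)$, so the conclusion $(\gamma_k^{(N)}(m),G_k^{(N)}(m))$ persists at the boundary. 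Your proof needs this (or an equivalent) argument to cover the closed endpoints. A smaller inaccuracy: your sign tally for $\gamma_k^{(N)}(m)$ as ``row-rearrangement parity times the anti-diagonal parity $(-1)^{k(k-1)/2}$'' omits the $(N-k)k$ cross-block inversions (the $\boldsymbol{a}$-rows select the last $N-k$ columns while the $\boldsymbol{b}$-rows select the first $k$); including them gives $k(k-1)/2+(N-k)k=Nk-k(k+1)/2$, which is how \eqref{eq:gamma_k} actually arises.
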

\begin{proof}
We first derive the expression \eqref{eq:gamma_k} and \eqref{eq:G_k}. Employing Proposition \ref{prop1} and \ref{prop2}, and then using \eqref{eq:eval_a} and \eqref{eq:eval_b}, we obtain 
\begin{align}
&g^{(N)}(m) \nonumber \\
&\sim {c_1}^N g_0^{(N)}(m) + \cdots + {c_1}^{N-k} {c_2}^k g_k^{(N)}(m) + \cdots + {c_2}^N g_N^{(N)}(m) \nonumber\\
&\sim \displaystyle{ \sum_{k=0}^{N} \left\{ c_1^{N-k} c_2^k (-1)^{Nk-k(k+1)/2} \prod_{i=1}^{N-k} a(m+2k-1+i) \prod_{j=1}^k b(m-N-2+3j) \right\} } \nonumber\\
&\sim \sum_{k=0}^{N} \biggl\{ 2^k c_1^{N-k} c_2^k (-1)^{Nk-k(k+1)/2} \nonumber \\
&\phantom{\sim}\times q^{\sum_{i=1}^{N-k} (m+2k-1+i)(m+2k-2+i)/2 - \sum_{j=1}^k (m-N-1+3j)(m-N-2+3j)/2} \biggr\}, \label{eq:g_eval1}
\end{align}
where we regard $\displaystyle \prod_{i=1}^{0} = 1$. Note that the sign $(-1)^{Nk-k(k+1)/2}$, which is actually \eqref{eq:gamma_k}, is derived from 
\begin{equation}
\mathrm{sgn}\begin{pmatrix}
k+1 & k+2 & \cdots & N & k & k-1 & \cdots & 2 & 1 \\
1 & 2 & \cdots & \cdots & \cdots & \cdots & \cdots & N-1 & N
\end{pmatrix}
= (-1)^{Nk-k(k+1)/2}.
\end{equation}
Moreover, we substitute \eqref{eq:qtoQ} and \eqref{eq:c12toAB} into \eqref{eq:g_eval1} and write the resulting expression as $g^{(N)}(m)\sim \sum 2^k (-1)^{Nk-k(k+1)/2} e^{G_k^{(N)}(m)/\varepsilon}$. Then, we find by direct calculation that the explicit form of $G_k^{(N)}(m)$ is written as \eqref{eq:G_k}. 

Next, we study the condition that ${c_1}^{N-k} {c_2}^k g_k^{(N)}(m)$ becomes dominant for a fixed $k$. We present the following lemma. 
\begin{lem}\label{lem2}
We assume $m\le -2N+3$. For a fixed $k=0,1,2,\ldots, N$,
\begin{align}
&G_k^{(N)}(m) > G_{k'}^{(N)}(m)\ \ (k' = 0,1,2,\ldots, k-1, k+1,\ldots, N) \nonumber \\
\Leftrightarrow {}&G_k^{(N)}(m) > G_{k-1}^{(N)}(m) \; {\rm and } \; G_k^{(N)}(m) > G_{k+1}^{(N)}(m)
\end{align}
holds.
\end{lem}
\begin{proof}
It is readily shown that for $l = 1,\ldots, k$, 
\begin{align}
G_k &> G_{k-l} \Leftrightarrow B-A > F(l), \\
F(l) := &\biggl\{m^2+(6k-2N-3l-1)m+8k^2+(-4N-8l-3)k \nonumber\\
&+(2l+1)N+\frac83l^2+\frac32l-\frac16\biggr\}Q.
\end{align}
Noting that $m\le -2N+3$, we obtain 
\begin{align}
F(1)-F(l) = \{(3l-3)m + P_1(N,k,l)\}Q > 0\ \ (l=2,3,\ldots,k),
\end{align}
where we omit the explicit expression of $P_1(N,k,l)$, which is independent of $m$. 
Hence, we obtain $G_k > G_{k-1} \Rightarrow G_k > G_{k-l}$. In a similar manner, one can show $G_k > G_{k+1} \Rightarrow G_k > G_{k+l'}$ for $l'=1,2,\ldots,N-k$. 

The converse is trivial. 
\end{proof}
We return to the proof of Proposition \ref{thm1}. The term $\gamma_k^{(N)}(m) e^{G_k^{(N)}(m)/\varepsilon}$ becomes the leading term of $g^{(N)}(m)$ if $G_k^{(N)}(m) > G_{k'}^{(N)}(m)$ $(k' = 0 , 1 , \cdots , k-1 , k+1 , \cdots , N)$ hold. These conditions are reduced to two inequalities by Lemma \ref{lem2} and moreover summarized as $f_k^{(N)}(m) < B-A < f_{k+1}^{(N)}(m)$, if we introduce 
\begin{equation}
f_k^{(N)}(m) = G_{k-1}^{(N)}(m) - G_k^{(N)}(m) - A + B, 
 \end{equation} 
whose explicit form is exactly \eqref{eq:f_k}. 

In the case $B-A=f_k^{(N)}$, we have  $ G_{k-1}^{(N)}(m)= G_{k}^{(N)}(m)$ and $G_{k}^{(N)}(m) > G_{k'}^{(N)}(m)$ $(k' = 0 , 1 , \cdots , k-2 , k+1 , \cdots , N)$.
Hence 
\begin{align}
& g^{(N)}(m) \sim 2^{k-1} ( \gamma^{(N)}_{k-1} (m) +2 \gamma^{(N)}_{k} (m) ) e^{G_{k}^{(N)}(m)/ \varepsilon } .
\end{align}
Since $\gamma^{(N)}_{k-1} (m) , \gamma^{(N)}_{k} (m) \in \{ 1,-1\} $, we have
$(\gamma^{(N)}(m) , G^{(N)}(m)) = (\gamma_k^{(N)}(m) , G_k^{(N)}(m)) $ in the case  $B-A=f_k^{(N)}$.
\end{proof}
Note that $(\gamma_0^{(N)}(m) , G_0^{(N)}(m))$ and $(\gamma_N^{(N)}(m) , G_N^{(N)}(m))$ are the uAi-type and uBi-type solutions, respectively, which were presented in \cite{IS} and \cite{IST}. 

Proposition \ref{thm1} can be restated in a different form by introducing $m_0$ defined in \eqref{eq:m_0}.
For simplicity, we assume that the values of $A,B$ and $Q$ are chosen as $m_0$ satisfies
\begin{align}
m_0 \leq \min (-3N+1, -N(N-1)/2 -1) , \quad {m_0}^2Q \le B-A<(m_0+1)^2Q. \label{eq:m_0ineq}
\end{align}
Set
\begin{align}
P'_0 &:= (m_0 + 1)^2Q,  \\
P'_j &:= (m_0^2 - (N-j)(N-j+1))Q ~~ (j = 1 , \cdots , N).\nonumber
\end{align}
Then, there exists an integer $k_0 \in \{ 0 ,1 , \cdots , N-1 \}$ such that $P'_{k_0 +1} \le B-A < P'_{k_0}$ holds.

\begin{thm}\label{thm2}
Assume that we have $m_0$ and $k_0$ mentioned above for (suitably) assigned values of $A,B$ and $Q$. Then, for a given $m\le -2N+1$, $(\gamma^{(N)}(m) , G^{(N)}(m))$ is written in terms of $(\gamma^{(N)}_k(m) , G^{(N)}_k(m))$ given in Proposition \ref{thm1} as follows: 
\\
(I) When $m \leq m_0 - 2N + 2$, we have
\begin{equation}
(\gamma^{(N)}(m) , G^{(N)}(m))=(\gamma_N^{(N)}(m) , G_N^{(N)}(m)). 
\end{equation}
(II) When $m_0 - 2N + 3 \leq m \leq m_0 + N - 1$, the value $m$ is written by introducing $j$ as 
\begin{equation}
m=
\begin{cases}
m_0 + N - 3j , m_0 + N - 3j + 1 , m_0 + N - 3j + 2  & (N-1 \geq j \geq k_0+1 ) \\
m_0 + N - 3j , m_0 + N - 3j + 1   & (j =k_0 ) \\
m_0 + N - 3j-1 , m_0 + N - 3j  , m_0 + N - 3j + 1  & (k_0 -1 \geq j \geq 0)
\end{cases}
\end{equation}
Then we have
\begin{equation}
(\gamma^{(N)}(m) , G^{(N)}(m))=(\gamma_j^{(N)}(m) , G_j^{(N)}(m)). 
\end{equation}
(III) When $m_0 + N \leq m \leq -2N+1 $, we have
\begin{equation}
(\gamma^{(N)}(m) , G^{(N)}(m))=(\gamma_0^{(N)}(m) , G_0^{(N)}(m)).
\end{equation}
\end{thm}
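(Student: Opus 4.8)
The plan is to recognize that Theorem~\ref{thm2} is merely a reorganization of Proposition~\ref{thm1}: there $m$ is held fixed and the dominant index $k$ is read off from the position of $B-A$ among the thresholds $f_1^{(N)}(m),\ldots,f_N^{(N)}(m)$, whereas here $A,B,Q$ (hence the value $B-A$, together with the associated $m_0$ and $k_0$) are fixed and $m$ varies. So for each admissible $m$ I would verify that $B-A$ lands in the interval dictating the claimed index $k$, namely $f_k^{(N)}(m)\le B-A<f_{k+1}^{(N)}(m)$, with the conventions of Proposition~\ref{thm1} that $k=0$ requires only $B-A<f_1^{(N)}(m)$ and $k=N$ requires only $f_N^{(N)}(m)\le B-A$. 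The conclusion then follows directly from that proposition.

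First I would record two elementary facts about the explicit quadratic \eqref{eq:f_k}. Substituting $m=m_0+N-3k+2$ collapses the shifted square to $m_0^2$, yielding the key evaluation $f_k^{(N)}(m_0+N-3k+2)=P'_k$ for $1\le k\le N$; in particular $f_N^{(N)}(m_0-2N+2)=m_0^2Q=P'_N$. Second, since $Q<0$ and (by the assumptions on $m_0$) each relevant $m$ lies to the left of or at the vertex $m=N-3k+2$ of the parabola, every $f_k^{(N)}(m)$ is increasing in $m$ over the range in question. I would also note that the coefficients $m_0^2-(N-j)(N-j+1)$ increase in $j$, so the $P'_j$ are strictly decreasing in $j$; this makes $k_0$ well defined and lets me compare $B-A$ with any $P'_j$ via the two defining relations $P'_{k_0+1}\le B-A<P'_{k_0}$ and $m_0^2Q\le B-A<(m_0+1)^2Q=P'_0$ from \eqref{eq:m_0ineq}.

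With these in hand, Regions I and III are the monotone extremes. For Region I I would use $f_N^{(N)}(m)\le f_N^{(N)}(m_0-2N+2)=m_0^2Q\le B-A$, forcing the index to $k=N$. For Region III I would evaluate $f_1^{(N)}(m_0+N)=\{(m_0+1)^2-(N-1)N\}Q\ge(m_0+1)^2Q>B-A$ and use monotonicity to get $B-A<f_1^{(N)}(m)$ throughout, forcing $k=0$. Region II is the substance: for each $j$ I would evaluate $f_j^{(N)}$ at the largest listed $m$ and $f_{j+1}^{(N)}$ at the smallest listed $m$, reducing the two interval inequalities to comparisons of terms of the shape $\{(m_0+c)^2-(\cdots)\}Q$ with $P'_0,P'_{k_0},P'_{k_0+1}$ or $P'_j$. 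For $j\ge k_0+1$ the lower bound is exactly $f_j^{(N)}(m_0+N-3j+2)=P'_j\le P'_{k_0+1}\le B-A$, and the upper bound follows from $f_{j+1}^{(N)}(m_0+N-3j)\ge P'_0>B-A$.

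The delicate point, and the step I expect to be the main obstacle, is the phase shift at $j=k_0$: there the window shrinks to the two values $m_0+N-3k_0,\ m_0+N-3k_0+1$, and the windows for $j<k_0$ slide down by one. This is forced by the strict inequality $B-A<P'_{k_0}$, because at $m=m_0+N-3k_0+2$ one has $f_{k_0}^{(N)}=P'_{k_0}>B-A$, so that integer is expelled from the $k=k_0$ window into the $k=k_0-1$ window, and integrality of $m$ propagates the shift. Verifying the resulting lower bounds is precisely where the hypotheses are consumed: $f_{k_0}^{(N)}(m_0+N-3k_0+1)\le B-A$ reduces to $m_0\le k_0-N+\tfrac12$, covered by $m_0\le-3N+1$, while the lower bounds $f_j^{(N)}(m_0+N-3j+1)\le B-A$ for $1\le j\le k_0-1$ reduce to $-2m_0+1\ge(N-j)(N-j+1)-(N-k_0-1)(N-k_0)$, whose worst case $j=1,\ k_0=N-1$ is exactly covered by $m_0\le-N(N-1)/2-1$. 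Finally I would confirm that $m_0\le-3N+1$ guarantees every $m$ under consideration satisfies $m\le-2N+1$, so that Proposition~\ref{thm1} indeed applies throughout.
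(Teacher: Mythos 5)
Your proof is correct and takes essentially the same route as the paper's: both reduce everything to the interval conditions $f_j^{(N)}(m)\le B-A<f_{j+1}^{(N)}(m)$ of Proposition~\ref{thm1}, verified by evaluating the explicit quadratic \eqref{eq:f_k} at suitable values of $m$ and comparing against $m_0^2Q$, $(m_0+1)^2Q$ and the ordered thresholds $P'_j$. Your two refinements---using monotonicity of $f_k^{(N)}(m)$ in $m$ (valid since every relevant $m$ lies left of the vertex $N-3k+2$) to cut each window to two endpoint checks, and explicitly working out the shifted windows at and below $j=k_0$, where the hypotheses $m_0\le-3N+1$ and $m_0\le-N(N-1)/2-1$ are actually consumed---only organize and complete the cases the paper dismisses with ``all other cases can be shown in a similar manner.''
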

\begin{proof}
We show two typical cases only. Note that
\begin{align}
P'_N < P'_{N-1} < \cdots < P'_{k_0+1} < P'_{k_0} < P'_{k_0-1} < \cdots < P'_1 < P'_0 \label{eq:P_mt}
\end{align}
holds. In case (I), we have 
\begin{equation}
f^{(N)}_N(m) = (m+2N-2)^2Q \leq m_0^2Q \le B-A
\end{equation}
and therefore obtain $(\gamma^{(N)}(m) , G^{(N)}(m))=(\gamma_N^{(N)}(m) , G_N^{(N)}(m))$ from Proposition \ref{thm1}. Next, in case (II) with $m = m_0 + N - 3j$ $(N-1 \geq j \geq k_0+1 )$, we have $f^{(N)}_j(m) \le B-A < f^{(N)}_{j+1}(m)$ since 
\begin{align}
f_j &= \{(m_0-2)^2 - (N-j)(N-j+1)\}Q < m_0^2Q \le B-A, \\
f_{j+1} &= \{(m_0+1)^2 - (N-j-1)(N-j)\}Q > (m_0+1)^2Q > B-A.
\end{align}
Hence, from Proposition \ref{thm1}, we obtain $(\gamma^{(N)}(m) , G^{(N)}(m))=(\gamma_j^{(N)}(m) , G_j^{(N)}(m))$. The case of $m = m_0 + N - 3j + 1$ is shown by replacing $m_0$ in the above with $m_0+1$. For $m = m_0 + N - 3j + 2$, we have 
\begin{align} 
f_j &= \{m_0^2 - (N-j)(N-j+1)\}Q = P'_j \leq P'_{k_0+1} \le B-A, \\
f_{j+1} &= \{(m_0+3)^2 - (N-j-1)(N-j)\}Q > (m_0+1)^2Q > B-A.
\end{align}
Note that we have used \eqref{eq:P_mt} to show the first inequality. Then, $f_j \le B-A < f_{j+1}$ holds and we obtain $(\gamma^{(N)}(m) , G^{(N)}(m))=(\gamma_j^{(N)}(m) , G_j^{(N)}(m))$ from Proposition \ref{thm1}. 

All other cases can be shown in a similar manner. 
\end{proof}
\subsection{Evaluation for $z^{(N)}(m)$}
In Introduction, we gave the p-ultradiscrete analogue of \eqref{eq:qPII} with $\tau=q^m$ and $a=q^{2N+1}$ as \eqref{eq:udP2}.
The variable transformation from $g^{(N)}(m)$ to $z^{(N)}(m)$ was given in \eqref{eq:gtoz}. We may write the p-ultradiscrete analogue of \eqref{eq:gtoz} as follows:
\begin{align}
\begin{cases}
\zeta^{(N)}(m) = \gamma^{(N+1)}(m+1) \gamma^{(N+1)}(m) \gamma^{(N)}(m+1) \gamma^{(N)}(m) \\
Z^{(N)}(m) = G^{(N+1)}(m+1) - G^{(N+1)}(m) - G^{(N)}(m+1) + G^{(N)}(m) - NQ.
\end{cases}\label{eq:GtoZ}
\end{align}
We study special solutions $(\zeta^{(N)}(m), Z^{(N)}(m))$ constructed by this transformation from $(\gamma^{(N)}(m), G^{(N)}(m))$, $(\gamma^{(N)}(m+1), G^{(N)}(m+1))$, $(\gamma^{(N+1)}(m), G^{(N+1)}(m))$ and $(\gamma^{(N+1)}(m+1), G^{(N+1)}(m+1))$, which we obtain in the previous subsection. For this purpose, we present the following lemma.
\begin{lem}\label{lem3}
Assume that $m\le -2N-1$.
Set
\begin{align}
& h_{{\rm I},l}^{(N)} (m) = f_l^{(N+1)}(m) ,\;  h_{{\rm II},l}^{(N)} (m) = f_l^{(N)}(m) , \nonumber \\
& h_{{\rm III},l}^{(N)} (m) = f_l^{(N+1)}(m+1) , \; h_{{\rm IV},l}^{(N)} (m) = f_l^{(N)}(m+1) ,
\end{align}
where $f_l^{(N)}(m)$ was defined by \eqref{eq:f_k}.
Then we have the inequalities
\begin{equation}
 h_{{\rm I},l}^{(N)} (m) < h_{{\rm II},l}^{(N)} (m) < h_{{\rm III},l}^{(N)} (m) < h_{{\rm IV},l}^{(N)} (m) < h_{{\rm I},l+1} ^{(N)}(m) 
\label{eq:f_ineq.}
\end{equation}
for $l = 1,\cdots, N$ and 
\begin{equation}
h_{{\rm I},N+1}^{(N)} (m) < h_{{\rm III},N+1}^{(N)} (m) .
\end{equation}
\end{lem}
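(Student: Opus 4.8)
The plan is to substitute the closed form \eqref{eq:f_k} for each $f_k^{(\bullet)}$ and reduce every inequality to an elementary comparison of quadratics, being careful that multiplication by $Q<0$ reverses the direction of each inequality. First I would introduce the shift variable $u := m-(N-3l+2)$, in terms of which the four relevant quantities become
\begin{align*}
 h_{{\rm I},l}^{(N)}(m) &= \{(u-1)^2 - r\}Q, & h_{{\rm II},l}^{(N)}(m) &= \{u^2 - p\}Q, \\
 h_{{\rm III},l}^{(N)}(m) &= \{u^2 - r\}Q, & h_{{\rm IV},l}^{(N)}(m) &= \{(u+1)^2 - p\}Q,
\end{align*}
where $p := (N-l)(N-l+1)$ and $r := (N-l+1)(N-l+2)$, so that $r-p = 2(N-l+1)$. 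The hypothesis $m\le -2N-1$ translates into $u \le -3(N-l+1) \le -3$ for $1\le l \le N$, hence $-2u \ge 6(N-l+1) \ge 6$; this single bound will power all the strict inequalities.

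Next I would express the right end of the chain in the same variable: since $m-(N-3(l+1)) = u+2$ and the corresponding constant term is again $p$, one has $h_{{\rm I},l+1}^{(N)}(m) = \{(u+2)^2 - p\}Q$. Because $Q<0$, each inequality $h_X < h_Y$ is equivalent to the reversed inequality between the bracketed quadratics, so the four comparisons in \eqref{eq:f_ineq.} reduce, in order, to
\[
 -2u+1 > 2(N-l+1),\quad r>p,\quad -2u-1 > 2(N-l+1),\quad -2u-3>0.
\]
The second holds since $r-p = 2(N-l+1)>0$, and the remaining three follow at once from $-2u\ge 6(N-l+1)$ together with $N-l+1\ge 1$.

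The final inequality $h_{{\rm I},N+1}^{(N)}(m) < h_{{\rm III},N+1}^{(N)}(m)$ I would handle by setting $l=N+1$ directly. There the constant term $(N-k)(N-k+1)$ vanishes, giving $h_{{\rm I},N+1}^{(N)}(m) = (m+2N)^2Q$ and $h_{{\rm III},N+1}^{(N)}(m) = (m+2N+1)^2Q$. With $v := m+2N \le -1$, and again reversing because $Q<0$, the claim becomes $v^2 > (v+1)^2$, i.e.\ $v<-\tfrac{1}{2}$, which is clear.

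I do not anticipate a genuine obstacle: the entire argument is bookkeeping of the sign reversal from $Q<0$ and a check that $m\le -2N-1$ is strong enough at every step, which it is with a comfortable margin. The one spot deserving attention is the third comparison $h_{{\rm III},l}^{(N)}(m) < h_{{\rm IV},l}^{(N)}(m)$, whose reduced form $-2u-1>2(N-l+1)$ is the most demanding of the linear conditions; it is precisely the constraint $l\le N$ (equivalently $N-l+1\ge 1$) that keeps it strict, which is why the chain runs only to $l=N$ and the $l=N+1$ case is separated off.
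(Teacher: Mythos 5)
Your proof is correct and takes essentially the same approach as the paper: the paper's proof consists of the single remark that each inequality follows from \eqref{eq:f_k} by direct computation, and your argument carries out exactly that computation, organized by the normalization $u=m-(N-3l+2)$ and the sign reversal from $Q<0$. One cosmetic slip: the shift inside $h_{{\rm I},l+1}^{(N)}(m)$ is $m-\bigl((N+1)-3(l+1)+2\bigr)=m-(N-3l)=u+2$, not $m-(N-3(l+1))$, but the formula $\{(u+2)^2-p\}Q$ you deduce from it is the correct one, so nothing downstream is affected.
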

\begin{proof}
Each inequality is obtained from \eqref{eq:f_k} through direct computation. 
\end{proof}

Since the pair $(\zeta, Z)$ is constructed from four pairs of $(\gamma, G)$ by \eqref{eq:GtoZ}, we have to know explicit functional forms of each $\gamma$ and $G$. As an example, we consider \eqref{eq:GtoZ} when $B-A$ satisfies $ h_{{\rm I},l}^{(N)} (m)  \le B-A <  h_{{\rm II},l}^{(N)} (m) $ for an integer $l$. In this case, we find from \eqref{eq:f_ineq.} that 
\begin{align}
\begin{cases}
f_{l-1}^{(N)}(m) \le B-A < f_l^{(N)}(m) \\
f_{l-1}^{(N+1)}(m+1) \le B-A < f_l^{(N+1)}(m+1) \\
f_{l}^{(N+1)}(m) \le B-A < f_{l+1}^{(N+1)}(m) \\
f_{l-1}^{(N)}(m+1) \le B-A < f_l^{(N)}(m+1)
\end{cases}
\end{align}
hold. From these inequalities, we obtain 
\begin{align}
\begin{cases}
(\gamma^{(N)}(m), G^{(N)}(m))=(\gamma_{l-1}^{(N)}(m), G_{l-1}^{(N)}(m)) \\
(\gamma^{(N+1)}(m+1), G^{(N+1)}(m+1))=(\gamma_{l-1}^{(N+1)}(m+1), G_{l-1}^{(N+1)}(m+1)) \\
(\gamma^{(N+1)}(m), G^{(N+1)}(m))=(\gamma_{l}^{(N+1)}(m), G_{l}^{(N+1)}(m)) \\
(\gamma^{(N)}(m+1), G^{(N)}(m+1))=(\gamma_{l-1}^{(N)}(m+1), G_{l-1}^{(N)}(m+1)),
\end{cases}
\end{align}
respectively, through Proposition \ref{thm1}. Therefore, from \eqref{eq:GtoZ}, we have a special solution of \eqref{eq:udP2}, 
\begin{align}
\zeta^{(N)} 
&= \gamma_{l-1}^{(N+1)}(m+1) \gamma_l^{(N+1)}(m) \gamma_{l-1}^{(N)}(m+1) \gamma_{l-1}^{(N)}(m) \nonumber\\
&= (-1)^{-2l^2+4lN+3l-3N-1} \nonumber\\
&= (-1)^{l+N+1} \\
Z^{(N)}
&= G_{l-1}^{(N+1)}(m+1) - G_l^{(N+1)}(m) - G_{l-1}^{(N)}(m+1) + G_{l-1}^{(N)}(m) - NQ \nonumber\\
&= A - B + (m^2+(6l-2N-5)m+(8l^2+(-4N-13)l+3N+5))Q.
\end{align}
We find from this example that if we know a suitable condition for the value of $B-A$, we find the explicit expression of each $(\gamma, G)$ by Proposition \ref{thm1} and furthermore that of $(\zeta, Z)$ by \eqref{eq:GtoZ}.
By investigating the other cases, we have the following proposition.

\begin{prop}\label{thm3}
For $m\le -2N-1$, we have
\begin{align}
(\zeta^{(N)}(m) , Z^{(N)}(m))= 
\begin{cases}
(+1, mQ) , & (B-A < h_{{\rm IV},0}^{(N)}(m) ) \\
(\zeta_{{\rm I},l}^{(N)}(m) , Z_{{\rm I},l}^{(N)}(m)) & (h_{{\rm I},l}^{(N)}(m) \le B-A < h_{{\rm II},l}^{(N)}(m) ) \\
(\zeta_{{\rm II},l}^{(N)}(m) , Z_{{\rm II},l}^{(N)}(m)) & (h_{{\rm II},l}^{(N)}(m) \le B-A < h_{{\rm III},l}^{(N)}(m) ) \\
(\zeta_{{\rm III},l}^{(N)}(m) , Z_{{\rm III},l}^{(N)}(m)) & (h_{{\rm III},l}^{(N)}(m) \le B-A < h_{{\rm IV},l}^{(N)}(m) ) \\
(\zeta_{{\rm IV},l}^{(N)}(m) , Z_{{\rm IV},l}^{(N)}(m)) & (h_{{\rm IV},l}^{(N)}(m) \le B-A < h_{{\rm I},l+1}^{(N)}(m) ) \\
(\zeta_{{\rm I},N+1}^{(N)}(m) , Z_{{\rm I},N+1}^{(N)}(m)) & (h_{{\rm I},N+1}^{(N)}(m) \le B-A < h_{{\rm III},N+1}^{(N)}(m) ) \\
(+1, (-m-2N-1)Q) & (h_{{\rm III},N+1}^{(N)}(m) \le B-A ) \\
\end{cases}
\end{align}
where $l=1,2,\dots ,N$ and 
\begin{align}
&\begin{cases}
\zeta_{{\rm I},l}^{(N)}(m) = (-1)^{N+l+1}, \\
Z_{{\rm I},l}^{(N)}(m) = A - B + [m^2+(6l-2N-5)m +8 l ^2-(4N+13)l+3N+5]Q, 
\end{cases}\\
&\begin{cases}
\zeta_{{\rm II},l}^{(N)}(m) = -1, \\
Z_{{\rm II},l}^{(N)}(m) = (-m-2l+1)Q, 
\end{cases}\\
&\begin{cases}
\zeta_{{\rm III},l}^{(N)}(m) = (-1)^{N+l}, \\
Z_{{\rm III},l}^{(N)}(m) = -Z_{{\rm I},l}^{(N)}(m+1), 
\end{cases}\\
&\begin{cases}
\zeta_{{\rm IV},l}^{(N)}(m) = 1, \\
Z_{{\rm IV},l}^{(N)}(m) = (m+2l)Q. 
\end{cases}
\end{align}
\end{prop}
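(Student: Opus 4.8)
The plan is to treat Proposition~\ref{thm3} as a systematic case analysis built on the two ingredients already in place: the transformation \eqref{eq:GtoZ}, which expresses $(\zeta^{(N)}(m),Z^{(N)}(m))$ through the four pairs $(\gamma^{(N)}(m),G^{(N)}(m))$, $(\gamma^{(N)}(m+1),G^{(N)}(m+1))$, $(\gamma^{(N+1)}(m),G^{(N+1)}(m))$ and $(\gamma^{(N+1)}(m+1),G^{(N+1)}(m+1))$; and Proposition~\ref{thm1}, which fixes each of these four pairs once one knows the interval (delimited by the thresholds $f_k$) into which $B-A$ falls. The worked example already given in the text, namely the band $h_{\mathrm{I},l}^{(N)}(m)\le B-A<h_{\mathrm{II},l}^{(N)}(m)$, is the template for every case; the whole proof amounts to repeating it for the remaining bands.

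The organizing device is Lemma~\ref{lem3}. Its chain of inequalities \eqref{eq:f_ineq.} interleaves the four families of thresholds $f_l^{(N+1)}(m)$, $f_l^{(N)}(m)$, $f_l^{(N+1)}(m+1)$, $f_l^{(N)}(m+1)$ (renamed $h_{\mathrm{I}}$, $h_{\mathrm{II}}$, $h_{\mathrm{III}}$, $h_{\mathrm{IV}}$) into a single increasing sequence. Consequently, for $B-A$ in any one of the elementary bands appearing in the statement, the position of $B-A$ relative to $f_k^{(N)}(m)$, $f_k^{(N)}(m+1)$, $f_k^{(N+1)}(m)$ and $f_k^{(N+1)}(m+1)$ is pinned down simultaneously, and reading off \eqref{eq:f_ineq.} yields a quadruple of indices. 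Feeding these into Proposition~\ref{thm1} then replaces each of the four pairs by the explicit $(\gamma_{k_i}^{(\cdot)},G_{k_i}^{(\cdot)})$. First I would tabulate, band by band (types I--IV for $l=1,\dots,N$, together with the two extremal bands governed by $h_{\mathrm{IV},0}$ and by $h_{\mathrm{I},N+1}$, $h_{\mathrm{III},N+1}$), the quadruple of indices produced in this way.

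With the indices in hand, the sign $\zeta^{(N)}(m)$ is computed from \eqref{eq:GtoZ} as a product of four factors $\gamma_{k_i}^{(\cdot)}$. Since \eqref{eq:gamma_k} shows $\gamma_k^{(N)}(m)=(-1)^{Nk-k(k+1)/2}$ is independent of $m$, this product collapses to $(-1)^{E}$ with $E$ a sum of the four exponents $Nk-k(k+1)/2$ (taken at the appropriate $N$ or $N+1$), and a short parity computation reduces $E$ modulo $2$ to the exponent stated in each of $\zeta_{\mathrm{I},l}^{(N)},\dots,\zeta_{\mathrm{IV},l}^{(N)}$. The amplitude $Z^{(N)}(m)$ is obtained by substituting the corresponding $G_{k_i}^{(\cdot)}$ from \eqref{eq:G_k} into the linear combination $G^{(N+1)}(m+1)-G^{(N+1)}(m)-G^{(N)}(m+1)+G^{(N)}(m)-NQ$; because \eqref{eq:G_k} is an explicit polynomial, cubic in the index and quadratic in $m$, each $Z$-value follows by direct simplification. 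The relation $Z_{\mathrm{III},l}^{(N)}(m)=-Z_{\mathrm{I},l}^{(N)}(m+1)$ can then be verified either straight from \eqref{eq:G_k} or by noting the symmetry between the index quadruples of bands I and III.

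The main obstacle is not conceptual but organizational: one must carry out the index bookkeeping for all four types (and the two extremal bands, where the degenerate indices $l=0$ and $l=N+1$ need separate attention, yielding the closed forms $(+1,mQ)$ and $(+1,(-m-2N-1)Q)$), and confirm that the ordering \eqref{eq:f_ineq.} is used consistently so that neighbouring bands agree on their shared endpoints. Care is also needed at the boundary values $B-A=h_{\cdot,l}^{(N)}(m)$, where, exactly as in the final paragraph of the proof of Proposition~\ref{thm1}, two consecutive leading terms coincide in order; one checks that the resulting sign is still the one listed, so that each interval may be taken half-open as written. Once the index tables are assembled the sign and amplitude computations are routine, so in the write-up I would present one further representative case in full and note that the others follow identically.
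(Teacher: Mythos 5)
Your proposal is correct and takes essentially the same route as the paper: the paper's own argument consists of working out the single band $h_{{\rm I},l}^{(N)}(m)\le B-A<h_{{\rm II},l}^{(N)}(m)$ in full---using Lemma~\ref{lem3} to locate $B-A$ relative to the four threshold families, Proposition~\ref{thm1} to fix the four pairs $(\gamma,G)$, and \eqref{eq:GtoZ} to compute the sign and amplitude---and then asserting that the remaining bands follow by the same procedure. Your additional bookkeeping (index tables, the degenerate bands at $l=0$ and $l=N+1$, and the half-open boundary cases) simply makes explicit what the paper compresses into ``by investigating the other cases.''
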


This proposition can be restated by introducing $m_0$ as Proposition \ref{thm1} is done. 
We assume that the values of $A,B$ and $Q$ are chosen as $m_0$ satisfies 
\begin{align}
m_0 \leq \min (-3N-2, -N(N+1)/2 -1) , \quad {m_0}^2Q \le B-A<(m_0+1)^2Q. \label{eq:m_0ineq2}
\end{align}
Set
\begin{align}
P_{0} &:= (m_0 + 1)^2Q, \\
P_j &:= (m_0^2 - (N-j+1)(N-j+2))Q ~~ (j = 1 , \cdots , N+1). \nonumber
\end{align}
Then, there exists an integer $k_0 \in \{ 0 , 1 , \cdots , N \}$ such that $P_{k_0 +1} \le B-A < P_{k_0}$ holds.
Using these notation, the result is written as follows:
\begin{thm} \label{thm4}
Assume that we have $m_0$ and $k_0 $ mentioned above for assigned values of $A,B$ and $Q$.
Then the following function $(\zeta ^{(N)}(m) , Z ^{(N)}(m))$ is obtained by the p-ultradiscrete limit of the solution of $q$-PII in terms of determinants.\\
(I) If $m \leq m_0 - 2N -1$, then 
\begin{equation}
(\zeta^{(N)}(m) , Z^{(N)}(m))
=
(+1 , (-m-2N-1)Q) 
\end{equation}
(II) If $m_0 - 2N \leq m \leq m_0 + N - 3k_0 +1 $, then 
\begin{align}
& (\zeta^{(N)}(m) , Z^{(N)}(m))
= \nonumber \\
& \begin{cases}
((-1)^{j} , A-B+(m_0^2+m_0-j^2)Q) & (m=m_0 -2N + 3j) \\
(+1 , (m_0+j+1)Q) & (m=m_0 -2N + 3j +1) \\
((-1)^{j} , B-A-(m_0^2+m_0-(j+1)^2)Q) & (m=m_0 -2N + 3j +2)  
\end{cases}
\end{align}
where $0 \leq j \leq N- k_0 $ in the first and the second cases and $0 \leq j \leq N- k_0 -1 $ in the third case.\\
(III) If $m = m_0 + N - 3k_0 +2 $ and $k_0 \neq 0$, then 
\begin{equation}
(\zeta^{(N)}(m) , Z^{(N)}(m))
= (-1 , (-m_0-N+k_0 -1)Q) 
\end{equation}
(IV) If $m_0 + N - 3k_0 +3 \leq m \leq m_0 + N $ and $k_0 \neq 0$, then
\begin{align}
& (\zeta^{(N)}(m) , Z^{(N)}(m))
= \nonumber \\
&  \begin{cases}
(+1 , (m_0+j)Q) & (m=m_0 -2N +3j) \\
((-1)^{j} ,B-A-(m_0^2-m_0-(j+1)^2)Q) & (m=m_0 -2N +3j+1)  \\
((-1)^{j+1} , A-B+(m_0^2-m_0-(j+1)^2)Q) & (m=m_0 -2N +3j+2) 
\end{cases}
\end{align}
where $N -k_0 +1 \leq j \leq N $ in the first case and $N- k_0 +1 \leq j \leq N-1 $ in the second and the third cases.\\
(V) If $m_0 + N +1 \leq m \leq -2N-1$, then
\begin{align}
(\zeta^{(N)}(m) , Z^{(N)}(m))
=(+1 , mQ) .
\end{align}
\end{thm}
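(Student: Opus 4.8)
The plan is to read Theorem \ref{thm4} off from Proposition \ref{thm3} by fixing the value of $B-A$ through $m_0$ and $k_0$ and then letting $m$ range over the stated regions, exactly as Theorem \ref{thm2} was deduced from Proposition \ref{thm1}. Proposition \ref{thm3} determines $(\zeta^{(N)}(m),Z^{(N)}(m))$ for each fixed $m$ according to which interval $B-A$ occupies among the thresholds $h_{*,l}^{(N)}(m)$, and Lemma \ref{lem3} guarantees that these thresholds are totally ordered as the type ${\rm I},{\rm II},{\rm III},{\rm IV}$ and the index $l$ increase. Since $B-A$ is held fixed while each $h_{*,l}^{(N)}(m)$ is, by \eqref{eq:f_k}, a downward-opening parabola in $m$ (recall $Q<0$), the active interval, and hence the active type and index, changes as $m$ sweeps through the regions; tracking these changes is the whole content of the theorem. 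As a preliminary step I would check, using \eqref{eq:m_0ineq2}, that every $m$ appearing in cases (I)--(V) satisfies $m\le -2N-1$, so that Proposition \ref{thm3} and Lemma \ref{lem3} genuinely apply throughout.

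First I would record the arithmetic link between the constants $P_j$ and the thresholds. Substituting into \eqref{eq:f_k} and completing the square gives the identities $P_j = h_{{\rm III},j}^{(N)}(m_0+N-3j+2) = h_{{\rm I},j}^{(N)}(m_0+N-3j+3)$, which hold because at these arguments the shifted variable inside the square equals $m_0$; moreover $P_{N+1}={m_0}^2Q$ and $P_0=(m_0+1)^2Q$, so that the two pinning conditions ${m_0}^2Q\le B-A<(m_0+1)^2Q$ and $P_{k_0+1}\le B-A<P_{k_0}$ fit into the single decreasing chain $P_{N+1}<P_{N}<\cdots<P_0$. Together these identities pin $B-A$ precisely relative to the family $\{h_{*,l}^{(N)}(m)\}$ evaluated at the transition values of $m$.

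Next, for each region and each residue of $m$ modulo $3$ I would substitute the explicit $m=m_0-2N+3j+r$ ($r=0,1,2$) into \eqref{eq:f_k}, compare the resulting thresholds against the two pinning inequalities to identify the unique type and index $l$ active at that $m$, and then read $(\zeta^{(N)}(m),Z^{(N)}(m))$ off Proposition \ref{thm3} via \eqref{eq:GtoZ}. For instance, at $m=m_0-2N+3j+1$ one finds the type-${\rm IV}$ entry with $l=N-j$, giving $(\zeta_{{\rm IV},l}^{(N)},Z_{{\rm IV},l}^{(N)})=(+1,(m+2l)Q)=(+1,(m_0+j+1)Q)$ as claimed; the other residues, together with the boundary pieces (I) and (V), are settled by the same quadratic comparison. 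The parameter $k_0$ enters only through $P_{k_0+1}\le B-A<P_{k_0}$, which singles out $m=m_0+N-3k_0+2$ as the value at which the active index switches, producing the isolated entry (III) and separating the lower three-cycle (II), carrying the high indices $l\ge k_0$, from the upper three-cycle (IV), carrying the low indices $l<k_0$.

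The main obstacle is the bookkeeping of this case analysis rather than any single hard estimate: one must simultaneously keep straight the four types, the index $l$, and the three residues of $m$, and in particular verify the sign exponents in $\zeta_{{\rm I},l}^{(N)}$ and $\zeta_{{\rm III},l}^{(N)}$ and the one-step shift of the three-cycle pattern across the transition $m=m_0+N-3k_0+2$, where the comparisons with $P_{k_0}$ and $P_{k_0+1}$ are decisive and where one must confirm that the parabola $h_{*,l}^{(N)}(m)$ crosses the fixed level $B-A$ on the correct side. Once the identities for $P_j$ and the inequality \eqref{eq:m_0ineq2} are in hand, each region reduces to the kind of direct quadratic comparison already carried out in the proof of Theorem \ref{thm2}.
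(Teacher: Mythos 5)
Your proposal is correct and follows essentially the same route as the paper: the paper also deduces Theorem \ref{thm4} from Proposition \ref{thm3} by fixing $B-A$ (pinned by $m_0$ and $k_0$), determining via the quadratic threshold comparisons ``in a similar manner to Theorem \ref{thm2}'' which type ${\rm I}$--${\rm IV}$ and index $l$ is active at each $m$, and then relabeling $j \to N-j$ to arrive at the stated form. Your explicit identities $P_j = h_{{\rm III},j}^{(N)}(m_0+N-3j+2) = h_{{\rm I},j}^{(N)}(m_0+N-3j+3)$ and the observation that the three-cycle pattern shifts by one step at $m = m_0+N-3k_0+2$ (producing case (III)) are exactly the bookkeeping the paper leaves implicit in its case table.
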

\begin{proof}
In a similar manner to Theorem \ref{thm2}, it follows that
\begin{align}
&(\zeta^{(N)}(m) , Z^{(N)}(m))\nonumber\\
&=
\begin{cases}
(+1 , (-m-2N-1)Q)  & (m \leq m_0 - 2N - 1) \\
(\zeta_{{\rm I},N+1}^{(N)}(m) , Z_{{\rm I},N+1}^{(N)}(m)) & (m = m_0 - 2N) \\
(\zeta_{{\rm IV},j}^{(N)}(m) , Z_{{\rm IV},j}^{(N)}(m)) & (m = m_0 + N - 3j + 1, k_0+1 \leq j \leq N) \\
(\zeta_{{\rm III},j}^{(N)}(m) , Z_{{\rm III},j}^{(N)}(m)) & (m = m_0 + N - 3j + 2, k_0+1 \leq j \leq N) \\
(\zeta_{{\rm I},j}^{(N)}(m) , Z_{{\rm I},j}^{(N)}(m)) & (m = m_0 + N - 3j + 3, k_0+1 \leq j \leq N)\\
(\zeta_{{\rm IV},j}^{(N)}(m) , Z_{{\rm IV},j}^{(N)}(m)) & (m = m_0 + N - 3k_0 + 1 ,j=k_0) \\
(\zeta_{{\rm II},j}^{(N)}(m) , Z_{{\rm II},j}^{(N)}(m)) & (m = m_0 + N - 3k_0 + 2,j=k_0)\\
(\zeta_{{\rm IV},j-1}^{(N)}(m) , Z_{{\rm IV},j-1}^{(N)}(m)) & (m = m_0 + N - 3k_0 + 3,j=k_0) \\
(\zeta_{{\rm III},j}^{(N)}(m) , Z_{{\rm III},j}^{(N)}(m)) & (m = m_0 + N - 3j + 1, 1 \leq j \leq k_0 -1) \\
(\zeta_{{\rm I},j}^{(N)}(m) , Z_{{\rm I},j}^{(N)}(m)) & (m = m_0 + N - 3j + 2, 1 \leq j \leq k_0 -1) \\
(\zeta_{{\rm IV},j-1}^{(N)}(m) , Z_{{\rm IV},j-1}^{(N)}(m)) & (m = m_0 + N - 3j + 3, 1 \leq j \leq k_0 -1) \\
(+1 , mQ)  & (m \geq m_0 + N + 1)
\end{cases}
\end{align}
Hence we obtain the theorem by arranging the terms and replacing the value $j$ to $N-j$.
\end{proof}
Note that the function $(\zeta ^{(N)}(m) , Z ^{(N)}(m))$ in Theorem \ref{thm4} is a solution of p-ultradiscrete PII, which follows from Lemma \ref{lem1}.

\subsection{Solutions of p-ultradiscrete Painlev\'e II}

In \cite{IST}, two solutions of p-ultradiscrete PII were derived by choosing the $q$-Ai or $q$-Bi functions as the seed function $w(\tau)$. 
They are described as follows.\\
Ai-type solution:
\begin{equation}
(\zeta^{(N)}(m) , Z^{(N)}(m)) =
\left\{
\begin{array}{ll}
((-1)^m ,0 ) & (m\geq 0)\\
(+1, mQ) &  (m\leq -1).
\end{array}
\right.
\label{eq:Ai-type}
\end{equation}
Bi-type solution:
\begin{equation}
(\zeta^{(N)}(m) , Z^{(N)}(m)) =
\left\{
\begin{array}{ll}
((-1)^{m+1},0) & (m\geq -2N)\\
(+1,(-m-2N-1)Q )&  (m\leq -2N-1).
\end{array}
\right. 
\label{eq:Bi-type}
\end{equation}
On the other hand, the function in Theorem \ref{thm4} includes the Ai-type solution and the Bi-type solution in special ranges of $m$, and they are interpolated by specific functions.
The solutions in Theorem \ref{thm4} for $m \leq -2N-1$ can be combined with the Ai-type solution for $m \leq -2N-1$.
Namely we have the following theorem.
\begin{thm} \label{thm:combined}
Under the assumption of Theorem \ref{thm4}, the function $ (\zeta^{(N)}(m) , Z^{(N)}(m)) $ defined by Theorem \ref{thm4} for $m\leq m_0+N$ and 
\begin{equation}
(\zeta^{(N)}(m) , Z^{(N)}(m)) 
 =
\left\{
\begin{array}{ll}
(+1, mQ) & (m_0+N+1 \leq m \leq -1) \\
((-1)^m ,0 ) & (m\geq 0)
\end{array}
\right.
\label{eq:typeAi}
\end{equation}
is a solution to p-ultradiscrete PII \eqref{eq:udP2}.
\end{thm}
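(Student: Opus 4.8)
The plan is to exploit the locality of the p-ultradiscrete PII \eqref{eq:udP2}: the relation indexed by $m$ couples only the three pairs $(\zeta_{m-1},Z_{m-1})$, $(\zeta_m,Z_m)$, $(\zeta_{m+1},Z_{m+1})$. Writing $F(m):=(\zeta^{(N)}(m),Z^{(N)}(m))$ for the spliced function of Theorem \ref{thm:combined}, it therefore suffices to verify \eqref{eq:udP2} at each fixed $m$, and this in turn reduces to checking that the triple $F(m-1),F(m),F(m+1)$ coincides with the corresponding triple of some function already known to solve \eqref{eq:udP2}. Two reference solutions are at hand: the function $H$ of Theorem \ref{thm4}, which solves \eqref{eq:udP2} (by the remark following that theorem, via Lemma \ref{lem1}) at every $m$ for which the whole window $\{m-1,m,m+1\}$ lies in the half-line $\{m\le -2N-1\}$ on which its explicit form is given, and the Ai-type solution $G$ of \eqref{eq:Ai-type}, which solves \eqref{eq:udP2} for all $m$.

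The pivotal observation is that $H$ and $G$ overlap. Case (V) of Theorem \ref{thm4} reads $H(m)=(+1,mQ)$ on $m_0+N+1\le m\le -2N-1$, while \eqref{eq:Ai-type} gives $G(m)=(+1,mQ)$ on the branch $m\le -1$; since $m_0+N+1\le -2N-1\le -1$ for $N\ge 0$, we have $H(m)=G(m)$ throughout $m_0+N+1\le m\le -2N-1$. By the construction \eqref{eq:typeAi}, $F$ therefore coincides with $H$ on the whole ray $m\le -2N-1$ and with $G$ on the whole ray $m\ge m_0+N+1$, the two descriptions being consistent on their common interval.

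First I would clear the generic windows. If $m\le -2N-2$ then $\{m-1,m,m+1\}\subseteq(-\infty,-2N-1]$, so $F=H$ on the window and \eqref{eq:udP2} holds at $m$ because $H$ solves there. If $m\ge m_0+N+2$ then $\{m-1,m,m+1\}\subseteq[m_0+N+1,\infty)$, so $F=G$ on the window and \eqref{eq:udP2} holds at $m$ because $G$ is a global solution. Under the standing hypothesis $m_0\le -3N-2$ one has $m_0+N+2\le -2N$, so these two families of indices already exhaust $\mathbb{Z}$ when $m_0\le -3N-3$, and when $m_0=-3N-2$ they cover every integer except the single seam $m=-2N-1$.

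The remaining obstacle is precisely this seam, and it is the only step that is not a pure invocation of a known solution. At $m=-2N-1$ with $m_0=-3N-2$ the window is $\{-2N-2,-2N-1,-2N\}$, where $F(-2N-2)=H(-2N-2)$ is the value supplied by the explicit formula of Theorem \ref{thm4} (generically different from the Ai-type value), whereas $F(-2N-1)=(+1,(-2N-1)Q)$ and $F(-2N)=(+1,-2NQ)$ are Ai-type values. Since this window genuinely mixes the two regimes, I would finish by substituting the three pairs directly into \eqref{eq:udP2} and confirming that the maximum on the left equals the maximum on the right. I expect this mechanical check to go through because $F(-2N-1)$ sits in the overlap and hence already matches both neighbouring regimes; equivalently, the computation amounts to verifying that the genuine $q\to 0$ limit solution described by Theorem \ref{thm4} continues, across this one step, as the plain Ai-type solution.
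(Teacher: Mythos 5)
Your proposal is correct and follows essentially the same route as the paper's proof: exploit the three-term locality of \eqref{eq:udP2}, invoke the Theorem \ref{thm4} solution on windows contained in $m\le -2N-1$, invoke the Ai-type solution \eqref{eq:Ai-type} on windows contained in $m\ge m_0+N+1$, and observe that under $m_0\le -3N-2$ only the seam case $m_0=-3N-2$, $m=-2N-1$ remains, which is settled by direct substitution. Your explicit verification that the two reference solutions agree on the overlap $m_0+N+1\le m\le -2N-1$ is a detail the paper leaves implicit, but it is the same argument.
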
 
\begin{proof}
p-ultradiscrete PII \eqref{eq:udP2} is a three term relation.
If $m \leq -2N-2$, then the relation \eqref{eq:udP2} is satisfied by Theorem \ref{thm4}.
If $m\geq m_0+N+2$, then the relation \eqref{eq:udP2} is satisfied because the Ai-type solution satisfies p-ultradiscrete PII.
Since $m_0 \leq -3N-2$, it remains to show the case $m_0=-3N-2$ and $m=-2N-1$, and it is shown directly that the function satisfies \eqref{eq:udP2} for $m=-2N-1$.
\end{proof}
We conjecture that the solution to $q$-PII which is ultradiscretized as the function in Theorem \ref{thm4} for $m \geq -2N-1$ is also ultradiscretized as \eqref{eq:typeAi}.

Theorem \ref{thm0} in Introduction is a special case of Theorem \ref{thm:combined}.
In fact, the case $k_0=0$ and $C= B-A-(m_0^2+m_0)Q$ in Theorem \ref{thm:combined} corresponds to Theorem \ref{thm0}.
We can also express the solutions for the case $k_0 \neq 0$ in a similar form to Theorem \ref{thm0}.
The solution in Theorem \ref{thm0} has richer structure than \eqref{eq:Ai-type} and \eqref{eq:Bi-type}. Although its asymptotic form \eqref{eq:thm0asym} is agree with \eqref{eq:Ai-type} or \eqref{eq:Bi-type}, \eqref{eq:thm03cycle} shows a structure of three-term cycles, which does not appear in \eqref{eq:Ai-type} and \eqref{eq:Bi-type}.
Moreover the solution has parameters $m_0$ and $C$ unlike \eqref{eq:Ai-type} and \eqref{eq:Bi-type}.

We now give an example of solutions of \eqref{eq:udP2}.
We consider the case of $A=450$, $B=25$, $Q=-3$ and $N=3$. Then, we have $m_0=-12$ and $k_0$ in Theorem \ref{thm4} becomes $k_0=2$. Applying Theorems \ref{thm4} and \ref{thm:combined}, we obtain a special solution of \eqref{eq:udP2},
\begin{align}
(\zeta^{(3)}(m), Z^{(3)}(m))=
\begin{cases}
(+1,3m+21) & (m\le -19) \\
(+1,29) & (m=-18) \\
(+1,33) & (m=-17) \\
(+1,-32) & (m=-16) \\
(-1,32) & (m=-15)\\
(+1,30) & (m=-14) \\
(-1,-30) & (m=-13) \\
(+1,30) & (m=-12) \\
(+1,16) & (m=-11) \\
(-1,-16) & (m=-10) \\
(+1,-3m) & (-9 \le m \le -1)\\
((-1)^m ,0 ) & (m\geq 0).
\end{cases}
\label{eq:z3}
\end{align}

We compare the solution \eqref{eq:z3} of p-ultradiscrete PII with a numerical solution of $q$-PII (see Figure \ref{fig1}).
\begin{figure}[h!tbp]
\centering\includegraphics[width=.7\linewidth]{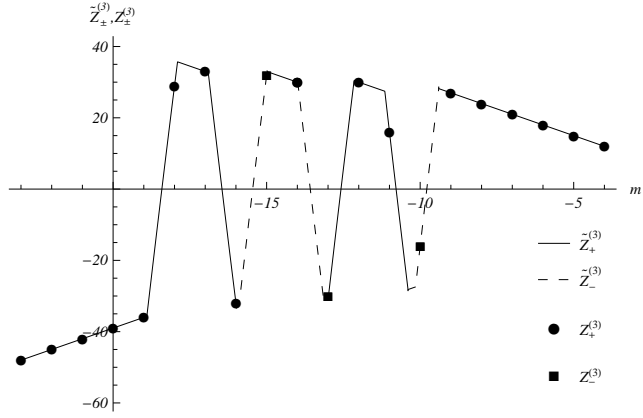}
\caption{Special solutions of $q$-PII and p-ultradiscrete PII.}
\label{fig1}
\end{figure}

In Figure \ref{fig1}, the solid lines (resp. the dashed lines)  represent the values of the function $\tilde{Z}^{(3)}_+ (m)$ (resp. $\tilde{Z}^{(3)}_- (m)$), where $z^{(3)} (m)  = e^{\tilde{Z}^{(3)}_+ (m)/\varepsilon }  $ (resp. $z^{(3)} (m) =- e^{\tilde{Z}^{(3)}_- (m) /\varepsilon } $ )  and $z^{(3)} (m)$ is the solution of $q$-PII  with $q=e^{Q/\varepsilon }$, $\varepsilon =0.1$, $Q=-3$, $N=3$, $A=450$ and $B=25$.
We write the values of $Z^{(3)}(m) $ with $\zeta ^{(3)} (m)=1$ (resp. $\zeta ^{(3)} (m)=-1$) by $Z^{(3)}_+(m) $ (resp. $Z^{(3)}_- (m) $) in \eqref{eq:z3} and plot it by the circles (resp. the squares) in Figure \ref{fig1}.
In the case $m \le m_0-2N-1$ or $m \ge m_0+N+1$, the solution \eqref{eq:z3} resembles the numerical solution of $q$-PII very much.
In the case $m_0 -2N \le m \le m_0+N$, the solution \eqref{eq:z3} grasps a feature of the solution of $q$-PII roughly, because we are handling \eqref{eq:z3} as a integer-valued function.
\section{Concluding Remarks} \label{concl}
We have given the p-ultradiscrete analogue of the general solution $w(m)$ for the $q$-Airy equation. Using this result, we have constructed the p-ultradiscrete analogues of special solutions for $q$-PII which have $w(m)$ as the seed.
As expected, the obtained solutions have richer structure than the $q$Ai- and $q$Bi-type solutions. We have found that their behavior is similar to that of a solution of $q$-PII.
Through the p-ultradiscretization, we clarify the asymptotics of the solutions of $q$-PII or p-ultradiscrete PII, although it is not clear from the expression in the form of determinant.
We expect that results in p-ultradiscrete PII are useful to analyze the property of $q$-PII. 
A technical problem is to consider the case that the condition $m_0 \leq \min (-3N-2, -N(N+1)/2 -1) $ is not satisfied.

Another problem for solutions of p-ultradiscrete PII is to investigate solutions which are not coming from special solutions in the form of determinant.
Note that Murata \cite{Mu} obtained some exact solutions to ultradiscrete PII without parity variable.

It is known that other $q$-difference Painlev\'e equations also admit solutions in the form of determinant.
Our results should be extended to the cases of other $q$-difference Painlev\'e equations.

\section*{Acknowledgment}
This work is supported by JSPS KAKENHI Grant Numbers 26790082 and 26400122.

\end{document}